\documentclass[a4paper,11pt,reqno]{amsart}
\usepackage[foot]{amsaddr}

\usepackage{amssymb}
\usepackage{epsfig}
\usepackage{amsfonts}
\usepackage{amsmath}
\usepackage{euscript}
\usepackage{amscd}
\usepackage{amsthm}
\usepackage{enumitem}
\DeclareMathAlphabet{\mathpzc}{OT1}{pzc}{m}{it}
\usepackage{enumitem}
\usepackage{color}
\usepackage{mathtools}
\usepackage[pagebackref, hypertexnames=false, colorlinks, citecolor=red, linkcolor=blue, urlcolor=red]{hyperref}

\usepackage{marginnote}

\usepackage[normalem]{ulem}


\newcommand{\marginextend}[1]{ \addtolength{\oddsidemargin}{-#1}  \addtolength{\evensidemargin}{-#1}
  \addtolength{\textwidth}{#1}\addtolength{\textwidth}{#1}}
\newcommand{\updownextend}[1]{ \addtolength{\topmargin}{-#1}  \addtolength{\textheight}{#1}
\addtolength{\textheight}{#1}}
\marginextend{1cm}
\updownextend{0cm}
\allowdisplaybreaks[4]

\usepackage{pst-node}
\usepackage{tikz-cd}
\usepackage{mathrsfs}
\hfuzz2pt 
\DeclareFontFamily{OT1}{pzc}{}
\DeclareFontShape{OT1}{pzc}{m}{it}{<-> s * [1.10] pzcmi7t}{}
\DeclareMathAlphabet{\mathpzc}{OT1}{pzc}{m}{it}

\DeclareSymbolFont{SY}{U}{psy}{m}{n}
\DeclareMathSymbol{\emptyset}{\mathord}{SY}{'306}

\theoremstyle{plain}

\newtheorem{thm}{Theorem}[section]
\newtheorem*{thm*}{Theorem}
\newtheorem{cor}[thm]{Corollary}
\newtheorem{lem}[thm]{Lemma}
\newtheorem{prop}[thm]{Proposition}
\newtheorem{defn}[thm]{Definition}
\newtheorem{rem}[thm]{Remark}

\newtheoremstyle{named}{}{}{\itshape}{}{\bfseries}{.}{.5em}{#1 \thmnote{#3}}
\theoremstyle{named}

\numberwithin{equation}{section}

\def\C{{\mathbb C}}

\def\norm#1{\left\|{#1}\right\|}

\def\ov{\overline}

\def\m{\mathcal}
\def\mb{\mathbb}

\def\w{\widehat }

\def\beq{\begin{eqnarray}}
\def\eeq{\end{eqnarray}}
\def\beqa{\begin{eqnarray*}}
\def\eeqa{\end{eqnarray*}}

\def\del{\partial}

\def\ov{\overline}
\def\bl{\boldsymbol}

\def\sgn{{\rm sgn}}
\def\bl{\boldsymbol}
\def\deg{{\rm deg}}

\newcommand{\be}{\begin{equation}}
\newcommand{\ee}{\end{equation}}
\newcommand{\bea}{\begin{eqnarray}}
\newcommand{\eea}{\end{eqnarray}}
\newcommand{\Bea}{\begin{eqnarray*}}
\newcommand{\Eea}{\end{eqnarray*}}
\newcommand{\inner}[2]{\langle #1,#2 \rangle }%

\newcounter{cnt1}
\newcounter{cnt2}
\newcounter{cnt3}
\newcommand{\blr}{\begin{list}{$($\roman{cnt1}$)$}
 {\usecounter{cnt1} \setlength{\topsep}{0pt}
 \setlength{\itemsep}{0pt}}}
\newcommand{\bla}{\begin{list}{$($\alph{cnt2}$)$}
 {\usecounter{cnt2} \setlength{\topsep}{0pt}
 \setlength{\itemsep}{0pt}}}
\newcommand{\bln}{\begin{list}{$($\arabic{cnt3}$)$}
 {\usecounter{cnt3} \setlength{\topsep}{0pt}
 \setlength{\itemsep}{0pt}}}
\newcommand{\el}{\end{list}}

\DeclareMathOperator  {\tr} {tr}


\title{Toeplitz operators on the Hardy spaces of quotient domains}

\author[Ghosh]{Gargi Ghosh}
\address[Ghosh]{Indian Institute of Science, Bangalore, 560012, India}
\email[Ghosh]{gargighosh@iisc.ac.in}
\subjclass[2020]{30H10, 47B35} \keywords{Toeplitz operator, Hardy space, Pseudorelfection group, Quotient domain}

\thanks{The author was supported by IoE-IISc Fellowship.}
\begin{document}

\maketitle
\begin{abstract} Let $\Omega$ be either the unit polydisc $\mb D^d$ or the unit ball $\mb B_d$ in $\mb C^d$ and $G$ be a finite pseudoreflection group which acts on $\Omega.$ Associated to each one-dimensional representation $\varrho$ of $G,$ we provide a notion of the (weighted) Hardy space $H^2_\varrho(\Omega/G)$ on $\Omega/G.$ Subsequently, we show that each $H^2_\varrho(\Omega/G)$ is isometrically isomorphic to the relative invariant subspace of $H^2(\Omega)$ associated to the representation $\varrho.$ For $\Omega=\mb D^d,$ $G=\mathfrak{S}_d,$ the permutation group on $d$ symbols and $\varrho = $ the sign representation of $\mathfrak{S}_d,$ the Hardy space $H^2_\varrho(\Omega/G)$ coincides to well-known notion of the Hardy space on symmetrized polydisc. We largely use invariant theory of the group $G$ to establish identities involving Toeplitz operators on $H^2(\Omega)$ and $H^2_\varrho(\Omega/G)$ which enable us to study algebraic properties (such as generalized zero product problem, characterization of commuting Toeplitz operators, compactness etc.) of Toeplitz operators on $H^2_\varrho(\Omega/G).$ \end{abstract}

\section{Introduction}
The study of Toeplitz operators on function spaces has a long history. Starting from the work of Brown and Halmos in \cite{MR160136}, it has attracted a lot of attention.  They initiated the study of algebraic properties of Toeplitz operators on the Hardy space $H^2(\mathbb D)$ on the unit disc $\mathbb D.$ Subsequently, a vast literature on Toeplitz operators on various function spaces has been developed. However, the study of Toeplitz operators on the reproducing kernel Hilbert space consisting of holomorphic functions on quotient domains has emerged recently. For example, Toeplitz operators on the Hardy space of the symmetrized bidisc and the symmetrized polydisc have been studied in \cite{MR4266152} and \cite{MR4244837}, respectively. Also in \cite{toepop2}, various algebraic properties of Toeplitz operators on the weighted Bergman space of quotient domain $\Omega/G$ have been studied whenever $G$ is a finite pseudoreflection group and $\Omega$ is a bounded $G$-invariant domain in $\mb C^d$. In this article, we study Toeplitz operators on the Hardy space of the quotient domain $\Omega/G$ in the light of Toeplitz operators on $H^2(\Omega)$.

Now we provide a very brief description of the results which are proved in this article. Let $\Omega$ denote either the unit polydisc $\mb D^d=\{\bl z \in \mb C^d: |z_1|, \ldots , |z_d| < 1\}$ or the open unit ball $\mb B_d$ with respect to the $\ell^2$-norm induced by the standard inner product on $\mb C^d.$ Let $G$ be a finite pseudoreflection group which acts on $\Omega$ by $\sigma \cdot \bl z = \sigma \bl z$ for $\sigma \in G$ and $\bl z \in \Omega.$ Associated to each one-dimensional representation $\varrho$ of $G,$ we define the Hardy space $H_\varrho^2(\Omega/G).$ For $\Omega=\mb D^d,$ $G=\mathfrak{S}_d,$ the permutation group on $d$ symbols and $\varrho = $ the sign representation of $\mathfrak{S}_d,$ the Hardy space $H^2_\varrho(\Omega/G)$ coincides to well-known notion of the Hardy space on symmetrized polydisc, given by Misra, Shyam Roy and Zhang in \cite{MR3043017}. Each $H_\varrho^2(\Omega/G)$ can be thought of a subspace of some $L^2$ space with respect to some measure on $\del(\Omega/G),$ where $\del X$ denote the Shilov boundary of the domain $X.$ We also prove that the relative invariant subspace $R_\varrho^G(H^2(\Omega))$ associated to $\varrho$ is isometrically isomorphic to $H_\varrho^2(\Omega/G).$ Then identities involving Toeplitz operators on $H^2(\Omega)$ and $H^2_\varrho(\bl \theta(\Omega))$ are established which allows us to study algebraic properties of Toeplitz operators on the Hardy space of the quotient domain $\Omega/G.$

This is a preliminary draft and in the subsequent draft, we shall add more results and further  directions. 

\section{The weighted Hardy space} In this section, we provide a notion of the (weighted) Hardy space on the quotient domain $\Omega/G$ using invariant theory and representation theory of the group $G.$ \subsection{Pseudoreflection group} A \emph{pseudoreflection} on $\C^d$ is a linear homomorphism $\sigma: \C^d \rightarrow \C^d$ such that $\sigma$ has finite order in $GL(d,\mb C)$ and the rank of $(I_d - \sigma)$ is 1. A group generated by pseudoreflections is called a pseudoreflection group. For example, any finite cyclic group, the permutation group $\mathfrak{S}_d$ on $d$ symbols, the dihedral groups are pseudoreflection groups \cite{MR2542964}. A pseudoreflection group $G$ acts on $\mb C^d$ by $\sigma \cdot \bl z = \sigma \bl z$ for $\sigma \in G$ and $\bl z \in \mb C^d$ and the group action extends to the set of all complex-valued functions on $\mb C^d$ by \bea\label{action}\sigma( f)(\bl z) =  f({\sigma}^{-1}\cdot \bl z), \,\, \text{~for~} \sigma \in G \text{~and~} \bl z \in \mb C^d.\eea A function $f$ is said to be $G$-invariant if $\sigma(f)=f$ for all $\sigma \in G.$ We denote the ring of all complex polynomials in $d$-variables by $\mb C[z_1,\ldots,z_d]$. The set of all $G$-invariant polynomials, denoted by $\mb C[z_1,\ldots,z_d]^G$, forms a subring and coincides with the relative invariant subspace $R^G_{tr}(\mb C[z_1,\ldots,z_d])$ associated to trivial representation of $G$. Chevalley, Shephard and Todd characterized finite pseudoreflection groups in the following theorem. We abbreviate it as CST theorem for further references.
\begin{thm*}[Chevalley-Shephard-Todd theorem]\cite[ p. 112, Theorem 3]{MR1890629}\label{A}
The invariant ring $\C[z_1,\ldots,z_d]^G$ is equal to $\C[\theta_1,\ldots,\theta_d]$, where $\theta_i$'s are algebraically independent
homogeneous polynomials if and only if $G$ is a finite pseudoreflection group. 
\end{thm*}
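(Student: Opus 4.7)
The plan is to treat the two implications separately, as they have quite different character.

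For the forward direction (finite pseudoreflection group implies polynomial invariant ring), I would begin by invoking the Reynolds operator $\rho(f)=|G|^{-1}\sum_{\sigma\in G}\sigma(f)$ together with Hilbert's basis theorem to establish that $\C[z_1,\ldots,z_d]^G$ is a finitely generated $\C$-algebra, and then pick a minimal set of homogeneous generators $\theta_1,\ldots,\theta_m$ of positive degree. The task is to show $m=d$ and that the $\theta_i$ are algebraically independent.

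The crux of the matter is Chevalley's lifting lemma: if $f_1,\ldots,f_n$ are homogeneous invariants with $f_1\notin (f_2,\ldots,f_n)\,\C[z_1,\ldots,z_d]^G$, then also $f_1\notin(f_2,\ldots,f_n)\,\C[z_1,\ldots,z_d]$. Its proof hinges on the single observation that for any pseudoreflection $\sigma$ with reflecting hyperplane $\{L_\sigma=0\}$, the difference $p-\sigma(p)$ is divisible by $L_\sigma$ for every $p\in\C[z_1,\ldots,z_d]$. Because $G$ is generated by pseudoreflections, this divisibility can be iterated to transfer any syzygy from the full polynomial ring into one in the invariant subring. Applying this lemma to a hypothetical minimal algebraic relation among $\theta_1,\ldots,\theta_m$ contradicts minimality of the generating set, forcing algebraic independence; a transcendence degree count then yields $m=d$.

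For the converse, suppose $\C[z_1,\ldots,z_d]^G=\C[\theta_1,\ldots,\theta_d]$ with algebraically independent homogeneous $\theta_i$. I would analyze the Jacobian $J=\det(\partial\theta_i/\partial z_j)$: it is nonzero by algebraic independence, and a direct computation gives $\sigma(J)=\det(\sigma)^{-1}J$ for every $\sigma\in G$. Molien's formula, specialized to the trivial and determinant representations, yields $\prod\deg\theta_i=|G|$ together with an expression for $\sum(\deg\theta_i-1)$ as the number of pseudoreflections in $G$. I would then factor $J$ into linear forms, show via a stabilizer analysis that each irreducible factor must be the defining equation of a reflecting hyperplane of some pseudoreflection in $G$, and close the argument by a degree comparison against the Molien identity, concluding that the pseudoreflections thus produced already generate $G$.

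The principal obstacle is the Chevalley lifting lemma in the forward direction; once it is in hand, both algebraic independence and the converse reduce to Hilbert-series bookkeeping and degree counting.
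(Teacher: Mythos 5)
The paper offers no proof of this statement: it is the classical Chevalley--Shephard--Todd theorem, quoted with a citation to Sturmfels' book, so there is no in-paper argument to compare yours against. Judged on its own, your outline has the right overall architecture (Reynolds operator and Hilbert's basis theorem, a Chevalley-type lemma, transcendence-degree count; Jacobian and Molien series for the converse), but the key lemma in the forward direction is misstated in a way that breaks the argument.

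The statement you call Chevalley's lifting lemma --- that $f_1\notin(f_2,\ldots,f_n)\,\C[z_1,\ldots,z_d]^G$ implies $f_1\notin(f_2,\ldots,f_n)\,\C[z_1,\ldots,z_d]$ --- is true for \emph{every} finite group in characteristic zero: its contrapositive follows by applying the Reynolds operator to the coefficients in a representation $f_1=\sum_{i\ge 2}g_i f_i$. Since it never uses pseudoreflections, it cannot be the engine that forces the invariant ring to be polynomial (a conclusion that fails for general finite linear groups). The correct key lemma is about syzygies: if $h_1,\ldots,h_n$ are homogeneous invariants with $h_1$ not in the ideal of $\C[z_1,\ldots,z_d]^G$ generated by $h_2,\ldots,h_n$, and if $p_1h_1+\cdots+p_nh_n=0$ with the $p_i$ homogeneous in $\C[z_1,\ldots,z_d]$, then $p_1$ lies in the Hilbert ideal generated by all positive-degree invariants. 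That is the statement whose proof uses $L_\sigma\mid p-\sigma(p)$ and induction on degree, and it is applied not to the hypothetical relation $H(\theta_1,\ldots,\theta_m)=0$ itself (a polynomial identity, not an ideal-membership assertion, so your lemma does not even engage with it) but to the syzygy $\sum_i H_i\,\partial\theta_i/\partial z_s=0$ obtained by differentiating it, where $H_i=(\partial H/\partial y_i)(\theta_1,\ldots,\theta_m)$; Euler's identity then turns the conclusion into a contradiction with minimality of the generating set. In the converse direction your skeleton (anti-invariance $\sigma(J)=\det(\sigma)^{-1}J$, Molien giving $\prod\deg\theta_i=|G|$ and $\sum(\deg\theta_i-1)=\#\{\text{pseudoreflections in } G\}$) is sound, but the claim that $J$ factors into linear forms cutting out reflecting hyperplanes is itself nontrivial and risks circularity; the standard closing move is instead to apply the already-proved forward direction to the subgroup $H$ generated by the pseudoreflections of $G$ and compare degrees of the two sets of basic invariants to force $|H|=|G|$.
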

The collection of homogeneous polynomials $\{\theta_i\}_{i=1}^d$ is called a homogeneous system of parameters (hsop) or a set of basic polynomials associated to the pseudoreflection group $G$. 
The map ${\bl\theta}: \C^d \rightarrow \C^d$, defined by
\bea\label{theta}
{\bl\theta}(\bl z) = \big(\theta_1(\bl z),\ldots,\theta_d(\bl z)\big),\,\,\bl z\in\C^d
\eea is called a basic polynomial map associated to the group $G.$ 
\begin{prop}\label{properholo}
Let $G$ be a finite pseudoreflection group and $\Omega$ be a $G$-space. For a basic polynomial map $\bl \theta: \mb C^d \to \mb C^d$ associated to the group $G,$
\begin{enumerate}
    \item[{\rm (i)}] $\bl \theta(\Omega)$ is a domain and
    \item[{\rm (ii)}] $\bl \theta : \Omega \to \bl \theta(\Omega)$ is a proper map.
    \item[{\rm (iii)}] The quotient $\Omega / G$ is biholomorphically equivalent to the domain $\bl \theta(\Omega)$
\end{enumerate}
\end{prop}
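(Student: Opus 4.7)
The plan is to reduce everything to properties of the global basic polynomial map $\bl\theta:\mb C^d\to\mb C^d$ and then restrict to $\Omega$. The key algebraic input is a stronger form of the CST theorem: $\mb C[z_1,\ldots,z_d]$ is a free module of rank $|G|$ over the invariant ring $\mb C[\theta_1,\ldots,\theta_d]$. In particular, each $z_i$ satisfies a monic polynomial over $\mb C[\theta_1,\ldots,\theta_d]$, which forces $\bl\theta:\mb C^d\to\mb C^d$ to be a finite, hence proper and surjective, holomorphic map whose fibres are precisely the $G$-orbits in $\mb C^d$ (for finite group actions, $G$-invariant polynomials separate $G$-orbits).

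With this in hand, (ii) is essentially formal. Since the components of $\bl\theta$ are $G$-invariant and $\Omega$ is $G$-invariant, one has $\bl\theta^{-1}(\bl\theta(\Omega))=G\cdot\Omega=\Omega$. For any compact $K\subseteq\bl\theta(\Omega)$, the preimage $\bl\theta^{-1}(K)$ is compact in $\mb C^d$ by global properness and is contained in $\Omega$, so it is compact in $\Omega$. Hence $\bl\theta|_{\Omega}$ is proper onto its image. For (i), connectedness of $\bl\theta(\Omega)$ is automatic from continuity of $\bl\theta$ and connectedness of $\Omega$; the only genuinely analytic point is openness. Here I would invoke the standard fact that a finite surjective holomorphic map between equidimensional complex manifolds is open: off the ramification locus $\{\det D\bl\theta=0\}$, a nowhere-dense analytic subvariety, $\bl\theta$ is a local biholomorphism by the inverse function theorem, and at ramification points openness follows from properness via the open mapping theorem for finite holomorphic maps between normal complex spaces.

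For (iii), the $G$-invariance of $\bl\theta$ makes it descend through the quotient to a map $\tilde{\bl\theta}:\Omega/G\to\bl\theta(\Omega)$, and the fibre description above shows $\tilde{\bl\theta}$ is a bijection. To upgrade it to a biholomorphism I would appeal to the analytic incarnation of the CST theorem, which identifies $\mb C^d/G$ with $\mb C^d$ as complex analytic spaces via $\bl\theta$; restricting this isomorphism to the saturated open set $\Omega\subset\mb C^d$ yields the desired $\Omega/G\cong\bl\theta(\Omega)$.

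The main obstacle I expect is (iii), which presupposes a consistent complex-analytic structure on the possibly singular quotient $\Omega/G$. Since $\bl\theta$ ramifies along the reflection hyperplanes, $\Omega/G$ is in general only a complex space, not a manifold, and the phrase ``biholomorphically equivalent'' must be interpreted in that category. The cleanest route is to accept the CST theorem in its geometric form as a black box (basic invariants realize $\mb C^d/G\to\mb C^d$ as an isomorphism of complex spaces), after which all three clauses follow transparently from the $G$-invariance of $\Omega$ and the finiteness of $\bl\theta$. The algebraic parts (properness and connectedness) are routine once the global finiteness is in place; the delicate analytic point is openness at ramification, for which one relies on the branched-covering picture supplied by finiteness.
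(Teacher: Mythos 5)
Your proposal is correct, and it is in fact more than the paper supplies: the paper gives no argument at all for this proposition, deferring (i) and (ii) to \cite[Proposition 1, p.~556]{MR3133729} and (iii) to \cite[Proposition 1]{MR807258}. Your reconstruction follows the standard route those references take: integrality of each $z_i$ over $\mb C[\theta_1,\ldots,\theta_d]$ gives global properness of $\bl\theta$ (preimages of bounded sets are bounded), separation of $G$-orbits by invariant polynomials identifies the fibres with orbits, the saturation identity $\bl\theta^{-1}(\bl\theta(\Omega))=\Omega$ then yields properness of the restriction, and Remmert's open mapping theorem for finite equidimensional holomorphic maps gives openness of the image. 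All of these steps are sound. One small correction to your closing discussion: the concern that $\Omega/G$ is ``in general only a complex space, not a manifold'' is misplaced in this setting --- smoothness of the quotient is precisely what the pseudoreflection hypothesis buys (it is the geometric content of the Chevalley--Shephard--Todd theorem, and is confirmed a posteriori by your own conclusion that $\Omega/G$ is isomorphic to the domain $\bl\theta(\Omega)\subseteq\mb C^d$). Since you ultimately invoke the geometric CST isomorphism $\mb C^d/G\cong\mb C^d$ and restrict it to the saturated open set $\Omega$, this does not affect the validity of the argument; it only means the category-theoretic hedging in your last paragraph is unnecessary.
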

Proof of (i) and (ii) can be found in \cite[Proposition 1, p.556]{MR3133729}. The remaining part of the Proposition follows from \cite[Proposition 1]{MR807258}, see also \cite[Subsection 3.1.1]{MR4404033}, \cite{MR2542964}.
In virtue of (iii), we work with the domain $\bl \theta(\Omega)$ instead of $\Omega / G.$   Special cases of such quotient domains have been studied in many instances. For example, 

\begin{itemize}
\item $\mb D^d / \mathfrak{S}_d$  is realized as the symmetrized polydisc (denoted by $\mb G_d$) in \cite{MR3906291}, where $\mathfrak{S}_d$ is the permutation group on $d$ symbols. 

\item Rudin's domains are realized as $\mb B_d / G$ ($\mb B_d$ is the unit ball of $\mb C^d$ with respect to $\ell_2$-norm) for a finite pseudoreflection group $G$ \cite{MR667790}.

\item In \cite{bender2020lpregularity}, Bender et al. realized a monomial polyhedron as a quotient domain $\Omega / G$ for $\Omega \subseteq \mb D^d$ and a finite abelian group $G.$

\end{itemize}
\begin{rem}
We note a few relevant properties of a basic polynomial map associated to $G$.
\begin{enumerate}
    \item[$1.$] Although a set of basic polynomials associated to $G$ is not unique but the degrees of $\theta_i$'s are unique for $G$ up to order. 
    \item[$2.$]Let $\bl \theta^\prime : \mb C^d \to \mb C^d$ be another basic polynomial map of $G.$ Then $\bl \theta^\prime(\Omega)$ is biholomorphically equivalent to $\bl \theta(\Omega)$ \cite[p. 5, Proposition 2.2]{kag}. Therefore, the notion of a basic polynomial map can be used unambiguously in the sequel. 
    \item[$3.$] Clearly, any function on $\bl \theta(\Omega)$ can be associated to a $G$-invariant function on $\Omega.$ Conversely, any $G$-invariant function $u$ on $\Omega$ can be written as $u=\widehat{u} \circ \bl \theta$ for a function $\widehat{u}$ on $\bl \theta(\Omega).$ Let $\mathfrak q : \Omega \to \Omega/G$ be the quotient map. Since the function $u$ is $G$-invariant, so $u=u_1\circ\mathfrak q$ for some function $u_1$ defined on $\Omega/G$. It is known that $\bl \theta = \mathfrak h \circ \mathfrak q$ for a biholomorphic map $\mathfrak h : \Omega/G \to \bl \theta(\Omega)$ \cite[p. 8, Proposition 3.4]{MR4404033}. Then $u$ can be written as $u=\widehat{u}\circ \mathfrak h \circ \mathfrak q=\widehat{u} \circ \bl \theta.$   
\end{enumerate}\end{rem}

\subsection{One-dimensional representations} Since the one-dimensional representations of a finite pseudoreflection group $G$ play an important role in our discussion, we elaborate on some relevant results for the same. We denote the subset of all one-dimensional representations of $G$ in $\w{G}$ by $\w{G}_1$.\begin{defn}
A hyperplane $H$ in $\C^d$ is called reflecting if there exists a pseudoreflection in $G$ acting trivially
on $H$.
\end{defn}
  For a pseudoreflection $\sigma \in G,$ define $H_{\sigma} := \ker({\rm id} - \sigma).$ By definition, the subspace $H_{\sigma}$ has dimension $d-1$. Clearly, $\sigma$ fixes the hyperplane $H_{\sigma}$ pointwise. Hence each $H_\sigma$ is a reflecting hyperplane.  By definition, $H_\sigma$ is the zero set of a non-zero homogeneous linear polynomial $L_\sigma$ on $\C^d$, determined up to a non-zero constant multiple, that is, $H_\sigma = \{\bl z\in\C^d: L_\sigma(\bl z) = 0\}$. Moreover, the elements of $G$ acting trivially on a  reflecting hyperplane forms a cyclic subgroup of $G$. 
  
  Let $H_1,\ldots, H_t$ denote the distinct reflecting hyperplanes associated to the group $G$ and  the corresponding cyclic subgroups are $G_1,\ldots, G_t,$ respectively. Suppose $G_i = \langle a_i \rangle$ and the order of each $a_i$ is $m_i$ for $i=1,\ldots,t.$ For every one-dimensional representation $\varrho$ of $G,$ there exists a unique $t$-tuple of non-negative integers $(c_1,\ldots,c_t),$ where $c_i$'s are the least non-negative integers that satisfy the following: \bea\label{ci}\varrho(a_i) =\big( \det(a_i)\big)^{c_i}, \,\, i=1,\ldots,t.\eea The $t$-tuple $(c_1,\ldots,c_t)$ solely depends on the representation $\varrho.$ The character of the one-dimensional representation $\varrho,$ $\chi_\varrho : G \to \mb C^*$ coincides with the representation $\varrho.$ 
 The set of polynomials relative to the representation $\varrho \in \w{G}_1$ is given by \bea\label{invar} R^G_{\varrho}(\mb C[z_1,\ldots,z_d]) = \{f \in \mb C[z_1,\ldots,z_d] : \sigma(f) = \chi_\varrho(\sigma) f, ~ {\rm for ~~ all~} \sigma \in G\}.\eea  The elements of the subspace $R^G_{\varrho}(\mb C[z_1,\ldots,z_d])$ are said to be  $\varrho$-invariant polynomials. Stanley proves a fundamental property of the elements of $R^G_{\varrho}(\mb C[z_1,\ldots,z_d])$ in \cite[p. 139, Theorem 3.1]{MR460484}.
\begin{lem}\label{gencz}\cite[p. 139, Theorem 3.1]{MR460484}
Suppose that the linear polynomial $\ell_i$ is a defining function of $H_i$ for $i=1,\ldots,t.$ The homogeneous polynomial $\ell_\varrho = \prod_{i=1}^t \ell_i^{c_i }$ is a generator of the module $R^G_{\varrho}(\mb C[z_1,\ldots,z_d])$ over the ring $\mb C[z_1,\ldots,z_d]^G,$ where $c_i$'s are unique non-negative integers as described in Equation \eqref{ci}.
\end{lem}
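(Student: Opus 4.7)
The plan is to prove the two natural inclusions: that $\ell_\varrho$ is itself $\varrho$-invariant, and that every $\varrho$-invariant polynomial is a $\mathbb{C}[z_1,\ldots,z_d]^G$-multiple of $\ell_\varrho$. Since $G$ is generated by the pseudoreflections $a_i$ ($i=1,\ldots,t$), it suffices throughout to check the character condition on these generators.

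For the first step, I would observe that each $a_i$ permutes the set of reflecting hyperplanes $\{H_1,\ldots,H_t\}$ (say $H_j \mapsto H_{\pi_i(j)}$) and that the exponents $c_j$ are constant along $G$-orbits of hyperplanes, because conjugate pseudoreflections have the same determinant and the same $\varrho$-value in \eqref{ci}. Hence $a_i(\ell_j) = \mu_{ij}\,\ell_{\pi_i(j)}$ for nonzero scalars $\mu_{ij}$, and after grouping by orbits the product $\prod_{j\neq i}\ell_j^{c_j}$ is carried to itself up to a scalar. On the remaining factor $\ell_i^{c_i}$, choose coordinates with $\ell_i = z_d$ and $a_i=\mathrm{diag}(1,\ldots,1,\lambda_i)$ with $\lambda_i=\det(a_i)$; a direct calculation with the action \eqref{action} gives $a_i(\ell_i^{c_i}) = \lambda_i^{c_i}\,\ell_i^{c_i} = \chi_\varrho(a_i)\,\ell_i^{c_i}$. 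Combining, $a_i(\ell_\varrho)=\chi_\varrho(a_i)\ell_\varrho$, so $\ell_\varrho\in R^G_\varrho(\mathbb{C}[z_1,\ldots,z_d])$.

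For the second step, let $f\in R^G_\varrho(\mathbb{C}[z_1,\ldots,z_d])$. Fix $i$ and choose coordinates as above so $\ell_i=z_d$ and $a_i$ acts diagonally with eigenvalue $\lambda_i$ on $z_d$. Expanding $f=\sum_k f_k(z_1,\ldots,z_{d-1})\,z_d^k$, the identity $a_i(f)=\chi_\varrho(a_i)\,f=\lambda_i^{c_i}f$ forces $\lambda_i^k f_k = \lambda_i^{c_i} f_k$, hence $k\equiv c_i\pmod{m_i}$ whenever $f_k\neq 0$. Because $c_i$ was chosen minimal with $0\le c_i<m_i$, the smallest admissible $k$ is $c_i$, so $\ell_i^{c_i}\mid f$. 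The linear forms $\ell_1,\ldots,\ell_t$ are pairwise non-proportional irreducibles, so by unique factorisation in $\mathbb{C}[z_1,\ldots,z_d]$ we conclude $\ell_\varrho=\prod_i\ell_i^{c_i}\mid f$. Writing $f=\ell_\varrho\cdot g$, applying any $\sigma\in G$ and using the $\varrho$-invariance of both $f$ and $\ell_\varrho$ gives $\chi_\varrho(\sigma)\ell_\varrho\,\sigma(g)=\chi_\varrho(\sigma)\ell_\varrho\,g$, so $\sigma(g)=g$ for all $\sigma$. Thus $g\in\mathbb{C}[z_1,\ldots,z_d]^G$, proving that $\ell_\varrho$ generates $R^G_\varrho(\mathbb{C}[z_1,\ldots,z_d])$ as a module over $\mathbb{C}[z_1,\ldots,z_d]^G$.

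The main obstacle is the bookkeeping in the first step: one must carefully track how the pseudoreflection $a_i$ permutes the remaining $\ell_j$'s (and how their scalars $\mu_{ij}$ multiply out) to see that these contributions cancel and only the factor coming from $\ell_i^{c_i}$ survives. The reason this works cleanly is precisely that the function $j\mapsto c_j$ is $G$-invariant on the orbit structure of reflecting hyperplanes; once that symmetry is noted, the rest is a direct eigenvalue/congruence computation.
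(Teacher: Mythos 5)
The paper does not actually prove this lemma: it is quoted verbatim from Stanley \cite[Theorem 3.1]{MR460484}, so you are in effect reproving Stanley's theorem. Your architecture (first show $\ell_\varrho$ is itself a relative invariant for $\chi_\varrho$, then show every element of $R^G_\varrho$ is divisible by $\ell_\varrho$ with $G$-invariant quotient) is exactly the standard one, and your second step is essentially complete: the eigenvalue expansion $f=\sum_k f_k z_d^k$ forcing $k\equiv c_i \pmod{m_i}$, the coprimality of the pairwise non-proportional linear forms $\ell_i$, and the cancellation $\sigma(f)=\sigma(\ell_\varrho)\sigma(g)$ giving $\sigma(g)=g$ are all correct.

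The gap is in the first step, precisely at the point you yourself flag as ``the main obstacle.'' The $G$-invariance of $j\mapsto c_j$ on orbits of hyperplanes shows only that $a_i$ carries $P:=\prod_{j\neq i}\ell_j^{c_j}$ to a \emph{scalar multiple} of itself, say $a_i(P)=\mu P$; it does not show $\mu=1$. A priori $\mu$ could be any $m_i$-th root of unity (the individual $\mu_{ij}$ certainly need not be $1$, and even the product around a single $\langle a_i\rangle$-cycle is not obviously trivial), and if $\mu\neq 1$ then $\ell_\varrho$ would be a relative invariant for the wrong character and the whole lemma would fail. Since ``Combining, $a_i(\ell_\varrho)=\chi_\varrho(a_i)\ell_\varrho$'' silently uses $\mu=1$, this must be proved. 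Two short fixes: (a) for an $\langle a_i\rangle$-cycle $H_{j_1}\to\cdots\to H_{j_k}\to H_{j_1}$, the product of the scalars equals the factor $\nu$ with $a_i^k(\ell_{j_1})=\nu\,\ell_{j_1}$; since $a_i^k$ fixes $H_i$ pointwise and $\ell_{j_1}$ does not vanish identically on $H_i$ (as $H_{j_1}\neq H_i$), restricting to $H_i$ gives $\nu=1$, so each cycle contributes $1$; or (b) reuse your own step-two expansion: $\mu$ is an $m_i$-th root of unity, hence $\mu=\lambda_i^{k}$ for some $0\le k<m_i$ (suitably signed), and the expansion argument then forces $\ell_i^{k'}\mid P$ for the corresponding $k'$, which is impossible unless $k'=0$ because $\ell_i\nmid P$; hence $\mu=1$. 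Separately, be careful with the convention in \eqref{action}: with $\sigma(f)=f\circ\sigma^{-1}$ and $a_i=\mathrm{diag}(1,\dots,1,\lambda_i)$ one gets $a_i(\ell_i)=\lambda_i^{-1}\ell_i$, not $\lambda_i\ell_i$, so your diagonal computation and the congruence $k\equiv c_i$ acquire a sign that must be reconciled with the definition $\varrho(a_i)=\det(a_i)^{c_i}$ in \eqref{ci} (the paper itself has the same tension in Corollary \ref{Jac}); this is a normalisation issue rather than a mathematical error, but it should be pinned down.
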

We call $\ell_\varrho$ by \emph{generating polynomial} of $R^G_{\varrho}(\mb C[z_1,\ldots,z_d])$ over $\mb C[z_1,\ldots,z_d]^G.$ It follows that $\sigma (\ell_{\varrho}) = \chi_{\varrho}(\sigma) \ell_{\varrho}.$ We single out a particular one dimensional representation and the associated generating polynomial. The sign representation of a finite pseudoreflection group $G,$ $\sgn : G \to \mb C^*,$ defined by \bea\label{sign} \sgn(\sigma) = (\det(\sigma))^{-1},\eea is given by ${\rm sgn} (a_i) = \big(\det(a_i)\big)^{m_i-1}=(\det(\sigma_i))^{-1}$, $i=1,\ldots,t,$ \cite[p. 139, Remark (1)]{MR460484} and it has the following property. 

\begin{cor}\cite[p. 616, Lemma]{MR117285}\label{Jac}
Let $H_1,\ldots, H_t$ denote the distinct reflecting hyperplanes associated to the group $G$ and let $m_1,\ldots, m_t$ be the orders of the corresponding cyclic subgroups $G_1,\ldots, G_t,$ respectively. Suppose that the linear polynomial $\ell_i$ is a defining function of $H_i$ for $i=1,\ldots,t.$ Then for a non-zero constant $c$, 
\Bea
J_{\bl \theta} (\bl z) = c \prod_{i=1}^t \ell_i^{m_i -1 }(\bl z) = \ell_{\rm sgn}(\bl z),
\Eea
where $J_{\bl \theta}$ is the determinant of the complex Jacobian matrix of the basic polynomial map $\bl \theta.$
Consequently, $J_{\bl \theta}$ is a basis of the module $R^G_{{\rm sgn}}(\mb C[z_1,\ldots,z_d])$ over the ring $\mb C[z_1,\ldots,z_d]^G$.
\end{cor}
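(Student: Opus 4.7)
The plan is to apply Lemma \ref{gencz} with $\varrho = \sgn$ and to identify $J_{\bl\theta}$ with the resulting generator up to a nonzero scalar. First I would verify that $J_{\bl\theta}$ is a $\sgn$-relative invariant: differentiating the $G$-invariance identity $\theta_i(\sigma \bl z) = \theta_i(\bl z)$ via the chain rule yields the matrix equation $D\bl\theta(\sigma\bl z) \cdot \sigma = D\bl\theta(\bl z)$, and taking determinants gives $J_{\bl\theta}(\sigma\bl z) = \det(\sigma)^{-1} J_{\bl\theta}(\bl z)$. In view of the definitions \eqref{action} and \eqref{sign}, this is precisely the statement $\sigma(J_{\bl\theta}) = \chi_{\sgn}(\sigma) J_{\bl\theta}$ for every $\sigma \in G$, hence $J_{\bl\theta} \in R^G_{\sgn}(\mb C[z_1,\ldots,z_d])$.

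Next I would identify the Stanley generator produced by Lemma \ref{gencz}. By \eqref{ci} the integers $c_i$ for $\varrho = \sgn$ are the least non-negative exponents with $\sgn(a_i) = \det(a_i)^{c_i}$; using $\sgn(a_i) = \det(a_i)^{m_i-1}$ (as recorded immediately before the corollary) together with the fact that $\det(a_i)$ is a primitive $m_i$-th root of unity (it is the nontrivial eigenvalue of the pseudoreflection $a_i$ of order $m_i$), one obtains $c_i = m_i - 1$. Thus $\ell_{\sgn} = \prod_{i=1}^t \ell_i^{m_i - 1}$, and Lemma \ref{gencz} provides $q \in \mb C[z_1,\ldots,z_d]^G$ with $J_{\bl\theta} = q \cdot \ell_{\sgn}$.

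Finally I would show that $q$ is a nonzero constant. The relevant degrees are $\deg J_{\bl\theta} = \sum_{i=1}^d (\deg\theta_i - 1)$ and $\deg \ell_{\sgn} = \sum_{i=1}^t (m_i - 1)$; the classical Shephard--Todd theorem identifies both sums with the total number of pseudoreflections in $G$, so they agree and $q$ must be a scalar. Non-vanishing of $J_{\bl\theta}$ follows from properness of $\bl\theta$ in Proposition \ref{properholo} (which forces $\bl\theta$ to be generically a local biholomorphism), and therefore $q$ is a nonzero constant, giving the first equality of the corollary with $c = q$ and $\ell_{\sgn} = \prod_{i=1}^t \ell_i^{m_i-1}$. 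The ``consequently'' clause is then immediate from Lemma \ref{gencz} since $J_{\bl\theta}$ differs from the generator $\ell_{\sgn}$ only by a nonzero scalar. The main obstacle is the Shephard--Todd degree identity invoked in this last step; granted that, the rest is a direct chain-rule calculation combined with a specialization of Lemma \ref{gencz}.
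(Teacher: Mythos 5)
The paper gives no proof of this corollary at all: it is quoted directly from Steinberg's lemma (\cite[p.~616, Lemma]{MR117285}), so the comparison here is between your argument and the classical one. Your outline is the standard textbook proof — relative invariance of the Jacobian via the chain rule, Stanley's generator from Lemma \ref{gencz}, a degree count, and non-vanishing — and it is in substance Steinberg's argument; it is essentially sound, but two points need attention.

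First, a sign-convention issue in your opening step. The chain rule gives $J_{\bl\theta}(\sigma\bl z)\det(\sigma)=J_{\bl\theta}(\bl z)$, hence $\sigma(J_{\bl\theta})=J_{\bl\theta}\circ\sigma^{-1}=\det(\sigma)\,J_{\bl\theta}$, whereas membership in $R^G_{\sgn}(\mb C[z_1,\ldots,z_d])$ as literally defined in \eqref{invar} and \eqref{sign} requires $\sigma(J_{\bl\theta})=\det(\sigma)^{-1}J_{\bl\theta}$. These agree only when $\det(\sigma)^2=1$, e.g.\ for real reflection groups; for $G=\mb Z/m$ acting on $\mb C$ by a primitive $m$-th root of unity with $m>2$, one checks that $J_{\bl\theta}=mz^{m-1}$ satisfies $\sigma(J_{\bl\theta})=\det(\sigma)J_{\bl\theta}\neq\chi_{\sgn}(\sigma)J_{\bl\theta}$. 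So your claim that the chain-rule identity ``is precisely'' the statement $\sigma(J_{\bl\theta})=\chi_{\sgn}(\sigma)J_{\bl\theta}$ is false as written. The mismatch is inherited from the paper's own conventions (the exponent convention in \eqref{ci} and the action \eqref{action} are not mutually compatible with Stanley's theorem as reproduced in Lemma \ref{gencz}); once one either replaces $\chi_\varrho$ by $\overline{\chi_\varrho}$ in \eqref{invar} or defines $c_i$ by $\varrho(a_i)=\det(a_i)^{-c_i}$, your identification goes through, still with $c_i=m_i-1$ since $\det(a_i)$ is a primitive $m_i$-th root of unity.

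Second, the external inputs in your closing step. The degree identity $\sum_{i=1}^d(\deg\theta_i-1)=\sum_{j=1}^t(m_j-1)$ (both sides counting the pseudoreflections in $G$) is genuinely needed and is not circular — it is normally derived from the Molien/Poincar\'e series independently of the Jacobian lemma — but it must be cited, as you acknowledge. For the non-vanishing of $J_{\bl\theta}$, invoking properness from Proposition \ref{properholo} is a detour: the algebraic independence of $\theta_1,\ldots,\theta_d$ guaranteed by the CST theorem already yields $J_{\bl\theta}\not\equiv 0$ by the Jacobian criterion in characteristic zero. With the convention repaired and the degree identity cited, your proof is complete.
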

Generalizing the notion of a relative invariant subspace, defined in Equation \eqref{invar}, we define the relative invariant subspace of $L_2(\del\Omega)$ associated to a one-dimensional representation $\varrho$ of $G,$ by
\Bea
R^G_{\varrho}(L^2(\del\Omega)) = \{f \in L^2(\del\Omega) : \sigma(f)  = \chi_\varrho(\sigma) f ~ {\rm for ~~ all~} \sigma \in G \}.
\Eea Similarly, the relative invariant subspace of $H^2(\Omega)$ associated to a one-dimensional representation $\varrho$ of $G,$ is defined by
\Bea
R^G_{\varrho}(H^2(\Omega)) = \{f \in H^2(\Omega) : \sigma(f)  = \chi_\varrho(\sigma) f ~ {\rm for ~~ all~} \sigma \in G \}.
\Eea

\subsection{Definition of the Hardy space on quotient domain} 
Let $G$ be a finite pseudoreflection group which acts on $\Omega.$  The basic polynomial map $\bl \theta: \Omega \to \bl \theta(\Omega)$ is a proper holomorphic map.  We define the Hardy space on $\bl \theta(\mb D^d)$ and $\bl \theta(\mb B_d)$ following \cite{MR3043017}.

{\sf For the polydisc:} Let $d\Theta$ be the normalized Lebesgue measure on the Torus $\mb T^d,$ where $\mb T = \{z \in \mb C: |z| =1\}$ is the unit circle. Let $d\Theta_{\varrho,\bl \theta}$ be the measure supported on the Shilov boundary $\del\bl \theta(\mb D^d)$ of $\bl \theta(\mb D^d)$ obtained from the following:
\bea\label{measure} \int_{\del\bl \theta(\mb D^d)} f d\Theta_{\varrho,\bl \theta} = \int_{\mb T^d} f\circ \bl \theta |\ell_\varrho|^2 d\Theta ,\eea where $\ell_\varrho$ is as defined in Lemma \ref{gencz}. The $L^2$ space of $\del\bl \theta(\mb D^d)$ with respect to the measure $d\Theta_{\varrho,\bl \theta}$ is given by \Bea L^2_\varrho(\del\bl \theta(\mb D^d))= \{f : \del\bl \theta(\mb D^d) \to \mb C | \int_{\del\bl \theta(\mb D^d)} |f|^2 d\Theta_{\varrho,\bl \theta} < \infty \}.\Eea
Associated to each one-dimensional representation $\varrho$ of $G,$ we define the weighted Hardy space $H_\varrho^2(\bl \theta(\mb D^d))$ to be the space consisting of holomorphic functions on $\bl \theta(\mb D^d)$ which satisfy
$ {\rm sup}_{0<r<1} \int_{\mb T^d} |f\circ \bl \theta(re^{i\Theta})|^2 |\ell_\varrho(re^{i\Theta})|^2 d\Theta < \infty.$ This is a Hilbert space with the norm 
\bea\label{norm1}\norm{f}^2 =\frac{1}{|G|} {\rm sup}_{0<r<1} \int_{\mb T^d} |f\circ \bl \theta(re^{i\Theta})|^2 |\ell_\varrho(re^{i\Theta})|^2 d\Theta.\eea We call the weighted Hardy space $H^2_\sgn(\bl \theta(\mb D^d))$ associated to the sign representation of $G$ by the Hardy space on $\bl \theta(\mb D^d)$ and denote it by $H^2(\bl \theta(\mb D^d)).$ For the sign representation of the permutation group $\mathfrak{S}_d,$ this notion of the Hardy space coincides with the same in \cite{MR3043017}. In Lemma \ref{equi}, we show that $H^2_\varrho(\bl \theta(\mb D^d))$ is isometrically embedded inside $L^2_\varrho(\del\bl \theta(\mb D^d)).$

{\sf For the unit ball:} Let us define the Hardy space $H^2(\bl \theta(\mb B_d))$ to be the space consisting of holomorphic functions on $\bl \theta(\mb B_d)$ which satisfy
$ {\rm sup}_{0<r<1} \int_{S_d} |f\circ \bl \theta(rt)|^2 |J_{\bl \theta}(rt)|^2 d\sigma(t) < \infty,$ where $d\sigma$ is the normalized Lebesgue measure on the unit sphere $S_d=\{(z_1,\ldots,z_d) \in \mb C^d: \sum_{i=1}^d |z_i|^2 =1\}.$ We set the norm of $f \in H^2(\bl \theta(\mb B_d))$ by 
\bea\label{norm2}\norm{f}^2=\frac{1}{|G|}{\rm sup}_{0<r<1} \int_{S_d} |f\circ \bl \theta(rt)|^2 |J_{\bl \theta}(rt)|^2 d\sigma(t).\eea We generalize this notion of the Hardy space for each one-dimensional representation of $G.$ For $\varrho \in \widehat{G}_1,$ the weighted Hardy space $H^2_\varrho(\bl \theta(\mb B_d))$ is the space consisting of holomorphic functions on $\bl \theta(\mb B_d)$ which satisfy
$ {\rm sup}_{0<r<1} \int_{S_d} |f\circ \bl \theta(rt)|^2 |\ell_\varrho(rt)|^2 d\sigma(t) < \infty,$ and the norm of $f \in H^2_\varrho(\bl \theta(\mb B_d))$ is given by $\norm{f}^2=\frac{1}{|G|}{\rm sup}_{0<r<1} \int_{S_d} |f\circ \bl \theta(rt)|^2 |\ell_\varrho(rt)|^2 d\sigma(t).$ Let $d\sigma_{\varrho,\bl \theta}$ be the measure supported on the Shilov boundary $\del\bl \theta(\mb B_d)$ of $\bl \theta(\mb B_d)$ obtained from the following:
\bea\label{measure1} \int_{\del\bl \theta(\mb B_d)} f d\sigma_{\varrho, \bl \theta} = \int_{S_d} f\circ \bl \theta |\ell_\varrho|^2 d\sigma.\eea The weighted $L^2$ space of $\del\bl \theta(\mb B_d)$ with respect to the measure $d\sigma_{\varrho,\bl \theta}$ is given by \Bea L^2_\varrho(\del\bl \theta(\mb B_d))= \{f : \del\bl \theta(\mb B_d) \to \mb C | \int_{\del\bl \theta(\mb B_d)} |f|^2 d\sigma_{\varrho,\bl \theta} < \infty \}.\Eea In Lemma \ref{equi}, we show that $H^2_\varrho(\bl \theta(\mb B_d))$ is isometrically embedded inside $L^2_\varrho(\del\bl \theta(\mb B_d)).$

\subsection{Isotypic decomposition and projection operators} We consider the natural action (sometimes called regular representation) of $G$ on $L^2(\del\Omega),$ given by $\sigma (f) (z) = f(\sigma^{-1} \cdot z).$ This action is a unitary representation of $G$ (as the weight $\omega$ is $G$-invariant) and consequently the space $L^2(\del\Omega)$ decomposes into isotypic components.

Now we define the projection operator onto the isotypic component associated to an irreducible representation $\varrho \in \widehat{G}$ in the decomposition of the regular representation on $L_\omega^2(\Omega)$ \cite[p. 24, Theorem 4.1]{MR2553682}. For $\varrho \in \widehat{G},$ the linear operator $\mb P_\varrho:L^2(\del\Omega) \to L^2(\del\Omega)$ is defined by
\Bea \mb P_\varrho \phi = \frac{\deg(\varrho)}{|G|}\sum_{\sigma \in G} \chi_\varrho(\sigma^{-1}) ~ \phi \circ \sigma^{-1}, \, \, \, \phi \in L^2(\del\Omega).\Eea

\begin{lem}
For each $\varrho \in \widehat{G}$, the operator $\mb P_\varrho:L^2(\del\Omega) \to L^2(\del\Omega)$ is an orthogonal projection.
\end{lem}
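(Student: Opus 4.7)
The plan is to verify the two defining properties of an orthogonal projection on $L^2(\del \Omega)$, namely idempotence $\mb P_\varrho^2 = \mb P_\varrho$ and self-adjointness $\mb P_\varrho^* = \mb P_\varrho$. Introduce the shorthand $U(\sigma)\phi := \phi\circ\sigma^{-1}$, so that
\[
\mb P_\varrho = \frac{\deg(\varrho)}{|G|}\sum_{\sigma \in G}\chi_\varrho(\sigma^{-1})\,U(\sigma).
\]
A direct computation gives $U(\sigma_1\sigma_2) = U(\sigma_1)U(\sigma_2)$, so $U$ is a representation of $G$ on $L^2(\del\Omega)$; it is unitary because the finite pseudoreflection group $G$ consists of linear maps preserving $\Omega$ and hence also preserves the Shilov boundary $\del\Omega$ (which is $\mb T^d$ or $S_d$) together with its normalized rotation-invariant probability measure.

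For idempotence, I would expand $\mb P_\varrho^2$ as a double sum over $G \times G$, reindex using $\eta = \sigma\tau$ so that $\tau^{-1} = \eta^{-1}\sigma$, and apply Schur orthogonality in its character form, namely the convolution identity
\[
\sum_{\sigma \in G}\chi_\varrho(\sigma^{-1})\,\chi_\varrho(\eta^{-1}\sigma) \;=\; \frac{|G|}{\deg(\varrho)}\,\chi_\varrho(\eta^{-1}),
\]
valid because $\varrho$ is irreducible. This collapses the double sum to exactly $\mb P_\varrho\phi$.

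For self-adjointness, I would take the adjoint of each summand using $U(\sigma)^* = U(\sigma^{-1})$, pull the complex conjugate inside the sum, and invoke the fact that any finite-dimensional representation of a finite group is unitarizable, so its character satisfies $\overline{\chi_\varrho(\sigma^{-1})} = \chi_\varrho(\sigma)$. A final reindexing $\tau = \sigma^{-1}$ returns the original formula for $\mb P_\varrho$.

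The only conceptual point is the unitarity of $U(\sigma)$, which hinges on the $G$-invariance of the measure on $\del\Omega$. For $\Omega = \mb B_d$ this is immediate since $G$ must act by unitary transformations. For $\Omega = \mb D^d$ it follows from the well-known fact that the linear self-maps of $\mb D^d$ are compositions of coordinate permutations with diagonal unitary rotations, both of which preserve the Haar measure on $\mb T^d$. Beyond that, the entire argument is a bookkeeping exercise with Schur orthogonality, with no substantive obstacle.
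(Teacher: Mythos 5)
Your proposal is correct and follows essentially the same route as the paper: self-adjointness via unitarity of the regular representation on $L^2(\del\Omega)$ (the $G$-invariance of the boundary measure) together with $\overline{\chi_\varrho(\sigma^{-1})}=\chi_\varrho(\sigma)$ and a reindexing, and idempotence via Schur orthogonality. The only difference is cosmetic: the paper simply cites Schur's Lemma for $\mb P_\varrho^2=\mb P_\varrho$, whereas you write out the character convolution identity explicitly, and you give a slightly more detailed justification of why the boundary measure is $G$-invariant; both of these are correct.
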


\begin{proof}
An application of Schur's Lemma implies that $\mb P_\varrho^2 = \mb P_\varrho$ \cite[p. 24, Theorem 4.1]{MR2553682}. 
We now show that $\mb P_\varrho$ is self-adjoint. Using change of variables formula, we get that for all $\phi, \psi \in L^2(\del\Omega) \text{~and~} \sigma \in G,$ \bea\label{inv}\inner{\sigma \cdot \phi}{\sigma \cdot \psi} = \inner{\phi}{\psi},\eea where $\inner{\cdot}{\cdot}$ denotes the inner product in $L^2(\del\Omega).$ For $\phi, \psi \in L^2(\del\Omega),$ we have
\Bea \inner{\mb P_\varrho^* \phi}{\psi} = \inner{\phi}{\mb P_\varrho\psi} &=& \inner{\phi}{\frac{\deg(\varrho)}{|G|}\sum_{\sigma \in G} \chi_\varrho(\sigma^{-1}) ~ \psi \circ \sigma^{-1}}\\ &=& \frac{\deg(\varrho)}{|G|}\sum_{\sigma \in G} \chi_\varrho(\sigma) \inner{\phi}{\psi \circ \sigma^{-1}} \\ &=&  \frac{\deg(\varrho)}{|G|}\sum_{\sigma \in G} \chi_\varrho(\sigma) \inner{\phi \circ \sigma}{\psi} \\&=& \inner{\mb P_\varrho \phi}{\psi}, \Eea
where the penultimate equality follows from Equation \eqref{inv}.
\end{proof}
Therefore, $L^2(\del\Omega)$ can be decomposed into an orthogonal direct sum as follows: \bea\label{l2ortho} L^2(\del\Omega) = \oplus_{\varrho \in \widehat{G}} \mb P_\varrho (L^2(\del\Omega)). \eea
\begin{lem}\label{equal1}
Let $G$ be a finite pseudoreflecion group and $\Omega$ be a $G$-invariant domain in $\C^d.$ Then for every $\varrho \in \widehat{G}_1,$ $\mb P_\varrho(L^2(\del\Omega)) = R^G_{\varrho}(L^2(\del\Omega)).$ 
\end{lem}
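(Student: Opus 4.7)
The plan is to prove the equality by establishing the two inclusions separately, exploiting the identity $\chi_\varrho(\sigma\tau)=\chi_\varrho(\sigma)\chi_\varrho(\tau)$, which is available precisely because $\varrho$ is one-dimensional.

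For the inclusion $\mb P_\varrho(L^2(\del\Omega)) \subseteq R^G_\varrho(L^2(\del\Omega))$, I would take an arbitrary $\phi \in L^2(\del\Omega)$ and show that $\tau(\mb P_\varrho \phi) = \chi_\varrho(\tau) \mb P_\varrho\phi$ for every $\tau \in G$. Unwinding the definition gives
\[
\tau(\mb P_\varrho\phi)(z) \;=\; \frac{1}{|G|}\sum_{\sigma\in G}\chi_\varrho(\sigma^{-1})\,\phi(\sigma^{-1}\tau^{-1}z).
\]
The substitution $\sigma'=\tau\sigma$ together with the multiplicativity of $\chi_\varrho$ (valid since $\deg \varrho=1$) will let me pull a factor $\chi_\varrho(\tau)$ out of the sum, turning it back into $\chi_\varrho(\tau)\mb P_\varrho\phi(z)$.

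For the reverse inclusion, I would start with $f \in R^G_\varrho(L^2(\del\Omega))$, so that $f\circ\sigma^{-1}=\chi_\varrho(\sigma)f$ for every $\sigma$. Plugging this into the definition of $\mb P_\varrho$ yields
\[
\mb P_\varrho f \;=\; \frac{1}{|G|}\sum_{\sigma\in G}\chi_\varrho(\sigma^{-1})\chi_\varrho(\sigma)\,f \;=\; \frac{1}{|G|}\sum_{\sigma\in G} f \;=\; f,
\]
again using multiplicativity of $\chi_\varrho$ to collapse $\chi_\varrho(\sigma^{-1})\chi_\varrho(\sigma)=\chi_\varrho(e)=1$. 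Hence $f$ lies in the range of $\mb P_\varrho$.

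There is no real obstacle here; the argument is a standard group-averaging computation. The only point to be careful about is that the whole argument leans on $\varrho$ being one-dimensional, so that $\chi_\varrho$ is itself a homomorphism rather than just a class function. For higher-dimensional $\varrho$ the isotypic component $\mb P_\varrho(L^2(\del\Omega))$ is strictly larger than the corresponding relative-invariant subspace, so this restriction to $\widehat{G}_1$ is essential.
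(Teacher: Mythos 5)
Your proof is correct: both inclusions are exactly the standard group-averaging computation, and your use of the multiplicativity of $\chi_\varrho$ (valid precisely because $\deg\varrho=1$) is the key point. The paper itself only cites an analogous lemma from an earlier work rather than writing out the argument, but that reference proves the statement by the same two-inclusion averaging computation you give, so there is nothing to add.
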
The proof is analogous to that of \cite[Lemma 3.3]{toepop2}. \begin{lem}\label{quo} Let $\varrho \in \w{G}_1$ and $f \in \mb P_\varrho(L^2(\del\Omega)).$ Then $f = \ell_\varrho~\widehat{f},$ where $\ell_\varrho$ is a generating polynomial of $R^G_{\varrho}(\mb C[z_1,\ldots,z_d])$ over the ring $\mb C[z_1,\ldots,z_d]^G$ and $\widehat{f}$ is $G$-invariant.
\end{lem}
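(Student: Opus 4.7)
The plan is to define $\widehat{f} := f/\ell_\varrho$ (off the zero set of $\ell_\varrho$) and show that it is a well-defined $G$-invariant function on $\partial\Omega$. The key ingredients are the transformation rule $\sigma(\ell_\varrho) = \chi_\varrho(\sigma)\ell_\varrho$ (from the discussion following Lemma \ref{gencz}), the description of $\mathbf{P}_\varrho(L^2(\partial\Omega))$ as $R^G_\varrho(L^2(\partial\Omega))$ (Lemma \ref{equal1}), and the fact that the vanishing locus of $\ell_\varrho$ meets the Shilov boundary in a set of measure zero.

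First I would invoke Lemma \ref{equal1} to translate the hypothesis $f \in \mathbf{P}_\varrho(L^2(\partial\Omega))$ into the pointwise (a.e.) identity $\sigma(f) = \chi_\varrho(\sigma) f$ for every $\sigma \in G$. Combined with $\sigma(\ell_\varrho) = \chi_\varrho(\sigma) \ell_\varrho$, a formal computation gives
\[
\sigma\!\left(\frac{f}{\ell_\varrho}\right)(\bl z) = \frac{f(\sigma^{-1}\cdot \bl z)}{\ell_\varrho(\sigma^{-1}\cdot \bl z)} = \frac{\chi_\varrho(\sigma) f(\bl z)}{\chi_\varrho(\sigma) \ell_\varrho(\bl z)} = \frac{f(\bl z)}{\ell_\varrho(\bl z)},
\]
wherever the quotient makes sense. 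Setting $\widehat{f}(\bl z) = f(\bl z)/\ell_\varrho(\bl z)$ on $\{\ell_\varrho \neq 0\}$ and $\widehat{f}(\bl z) = 0$ elsewhere, one then concludes that $\widehat{f}$ is $G$-invariant and that $f = \ell_\varrho\,\widehat{f}$ a.e.\ on $\partial\Omega$.

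The main obstacle is the measure-theoretic justification: I must check that $\{\ell_\varrho = 0\} \cap \partial\Omega$ has measure zero, so that the quotient $f/\ell_\varrho$ is well-defined a.e., and so that the a.e.\ identities above do not suffer pathology on the vanishing locus. Since $\ell_\varrho = \prod_{i=1}^t \ell_i^{c_i}$, this reduces to showing that each reflecting hyperplane $H_i = \{\ell_i = 0\}$ meets $\mathbb{T}^d$ (respectively $S_d$) in a set of measure zero. This follows because $\ell_i$ is a non-trivial complex linear polynomial, so its restriction to $\mathbb{T}^d$ (respectively to $S_d$) is a non-zero real-analytic function on a connected real-analytic manifold; its vanishing locus is therefore a proper real-analytic subset and has Lebesgue measure zero. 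For $\mathbb{T}^d$ one may see this directly from the Fourier expansion $\ell_i(e^{i\theta_1},\ldots,e^{i\theta_d}) = c + \sum_j a_j e^{i\theta_j}$, which cannot vanish identically unless $\ell_i \equiv 0$.

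With that measure-theoretic point in hand, the proof assembles in one line: fix a representative of $f$ satisfying the relative invariance pointwise off a null set, define $\widehat{f}$ as above, observe $G$-invariance of $\widehat{f}$ via the displayed computation, and note that $f(\bl z) = \ell_\varrho(\bl z)\widehat{f}(\bl z)$ holds a.e.\ (the identity being trivially true on the null set where $\ell_\varrho$ vanishes, since both sides are then $0$ a.e.). This gives the desired factorization.
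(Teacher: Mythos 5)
Your proposal is correct, and it supplies exactly the argument the paper leaves implicit: Lemma \ref{quo} is stated without proof (the companion Lemma \ref{equal1} is only referred to the analogous Bergman-space result in \cite{toepop2}), and the intended reasoning is precisely your division argument --- combine $\sigma(f)=\chi_\varrho(\sigma)f$ with $\sigma(\ell_\varrho)=\chi_\varrho(\sigma)\ell_\varrho$ to see that $f/\ell_\varrho$ is $G$-invariant, after checking that $Z(\ell_\varrho)\cap\del\Omega$ is null because each reflecting hyperplane meets $\mb T^d$ (resp.\ $S_d$) in a proper real-analytic subset. The only nit is cosmetic: $\ell_i$ is a \emph{homogeneous} linear form, so its restriction to $\mb T^d$ is $\sum_j a_j e^{i\theta_j}$ with no constant term $c$; this does not affect your conclusion that it cannot vanish identically.
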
 \begin{rem}
With a very similar approach, it can be shown that also the space $H^2(\Omega)$ admits a decomposition as above. 
The linear map $\mb P_\varrho:H^2(\Omega) \to H^2(\Omega),$ defined by,
\Bea \mb P_\varrho \phi = \frac{\deg(\varrho)}{|G|}\sum_{\sigma \in G} \chi_\varrho(\sigma^{-1}) ~ \phi \circ \sigma^{-1}, \, \, \, \phi \in H^2(\Omega) \Eea is the orthogonal projection onto the isotypic component associated to the irreducible representation $\varrho$ in the decomposition of the regular representation on $H^2(\Omega)$ \cite[p. 24, Theorem 4.1]{MR2553682} \cite[Corollary 4.2]{MR4404033} and
\bea\label{decomp} H^2(\Omega) = \oplus_{\varrho \in \widehat{G}} \mb P_\varrho (H^2(\Omega)). \eea
\end{rem}
\begin{lem}\label{equal2}
Let $G$ be a finite pseudoreflecion group and $\Omega$ be a $G$-invariant domain in $\C^d.$ Then for every $\varrho \in \widehat{G}_1,$ $\mb P_\varrho(H^2(\Omega)) = R^G_{\varrho}(H^2(\Omega)).$ Moreover, if $f \in \mb P_\varrho(H^2(\Omega)),$ then $f = \ell_\varrho~\widehat{f},$ where $\ell_\varrho$ is a generating polynomial of $R^G_{\varrho}(\mb C[z_1,\ldots,z_d])$ over the ring $\mb C[z_1,\ldots,z_d]^G$ and $\widehat{f}$ is a $G$-invariant holomorphic function.
\end{lem}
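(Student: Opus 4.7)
The lemma splits into the identity $\mb P_\varrho(H^2(\Omega)) = R^G_\varrho(H^2(\Omega))$ and the factorization $f = \ell_\varrho \widehat{f}$ with $\widehat{f}$ a $G$-invariant holomorphic function. I would prove them in that order, essentially repeating the $L^2$-arguments of Lemma \ref{equal1} and Lemma \ref{quo} in the $H^2$-setting, and then inserting an extra holomorphic-division step to upgrade the $L^2$-factorization to a holomorphic one.

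For the set equality, both inclusions are direct computations that exploit $\deg(\varrho) = 1$ and the multiplicativity of $\chi_\varrho$. If $f \in R^G_\varrho(H^2(\Omega))$, then $f \circ \sigma^{-1} = \chi_\varrho(\sigma) f$ for every $\sigma \in G$, so plugging into the definition of $\mb P_\varrho$ yields
$$\mb P_\varrho f = \frac{1}{|G|}\sum_{\sigma \in G} \chi_\varrho(\sigma^{-1}) \chi_\varrho(\sigma)\, f = f.$$
Conversely, for $\phi \in H^2(\Omega)$ and $\tau \in G$, applying $\tau$ to $\mb P_\varrho \phi$ and reindexing via $\sigma' = \tau\sigma$ produces a global factor $\chi_\varrho(\tau)$, so $\tau(\mb P_\varrho \phi) = \chi_\varrho(\tau) \mb P_\varrho \phi$ and hence $\mb P_\varrho \phi \in R^G_\varrho(H^2(\Omega))$.

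For the factorization, given $f \in R^G_\varrho(H^2(\Omega))$ define $\widehat{f} := f/\ell_\varrho$ on $\Omega \setminus V(\ell_\varrho)$. Both $f$ and $\ell_\varrho$ carry the character $\chi_\varrho$ (the latter by Lemma \ref{gencz}), so the ratio is $G$-invariant where defined. To extend $\widehat{f}$ holomorphically across $V(\ell_\varrho)$, I would argue locally at a smooth point of $V(\ell_\varrho)$ lying on a single hyperplane $H_i$: in coordinates where $\ell_i$ becomes the first coordinate and $a_i$ acts by $(z_1, z_2, \ldots, z_d) \mapsto (\xi z_1, z_2, \ldots, z_d)$ with $\xi = \det(a_i)$ a primitive $m_i$-th root of unity, the relation $a_i(f) = \chi_\varrho(a_i) f$ applied to the Taylor expansion of $f$ in $z_1$ forces every coefficient of $z_1^k$ with $k \not\equiv c_i \pmod{m_i}$ to vanish, so $f$ is locally $\ell_i^{c_i}$ times a holomorphic function. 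Iterating over the hyperplanes through the point gives divisibility of $f$ by $\ell_\varrho = \prod_i \ell_i^{c_i}$ on the smooth locus of $V(\ell_\varrho)$. The complementary stratum (intersections of distinct hyperplanes) has complex codimension $\geq 2$ in $\Omega$, so Riemann's removable-singularity theorem completes the extension to all of $\Omega$, and $G$-invariance persists by continuity.

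The chief obstacle is the last step: carrying out the local Taylor analysis and confirming that the exponent produced by the character identity matches exactly the $c_i$ appearing in $\ell_\varrho$. An alternative, more algebraic route that sidesteps the removable-singularity apparatus is to Taylor-expand $f$ at $0 \in \Omega$ into homogeneous components $f_n$, observe that each $f_n$ is a $\varrho$-relative invariant polynomial (the $G$-action preserves degree), apply Lemma \ref{gencz} termwise to write $f_n = \ell_\varrho g_n$ with $g_n$ a $G$-invariant homogeneous polynomial, and then identify $\widehat{f}$ with the sum $\sum_n g_n$ after verifying that this series has the same domain of convergence as that of $f$.
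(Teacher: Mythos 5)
The paper never actually proves this lemma: it is stated bare, with the surrounding text deferring to \cite[Lemma 3.3]{toepop2} for the analogous $L^2$ statement (Lemma \ref{equal1}), leaving Lemma \ref{quo} unproved, and citing \cite[Lemma 3.1, Remark 3.3]{kag} for the converse inclusion in the subsequent remark. So there is no in-paper argument to compare against, and your proposal has to stand on its own --- which it essentially does. The two character computations establishing $\mb P_\varrho(H^2(\Omega)) = R^G_{\varrho}(H^2(\Omega))$ are correct; they use only $\deg\varrho=1$, the reindexing $\sigma'=\tau\sigma$, and the (implicitly needed, easily supplied) fact that composition with elements of $G$ preserves $H^2(\Omega)$, which holds here because $G$ acts by linear automorphisms fixing the origin and hence unitarily on $H^2(\mb D^d)$ and $H^2(\mb B_d)$. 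For the factorization, forming $\widehat{f}=f/\ell_\varrho$ off the reflecting hyperplanes, checking $G$-invariance from $\sigma(\ell_\varrho)=\chi_\varrho(\sigma)\ell_\varrho$, doing the transverse Taylor expansion at a smooth point of a single hyperplane, and finishing with the codimension-two removable singularity theorem is exactly the mechanism behind the Analytic CST of \cite{MR4404033}, so this is the right route. The exponent-matching worry you flag is only bookkeeping: the congruence on $k$ that the relation $a_i(f)=\chi_\varrho(a_i)f$ imposes on the coefficients of $z_1^k$ is the same congruence that characterizes $\varrho$-relative invariant polynomials, so whatever least non-negative residue it singles out is, by definition, the $c_i$ of Equation \eqref{ci} and Lemma \ref{gencz} once one fixes a single consistent convention for the action (the paper itself is slightly cavalier about $\sigma$ versus $\sigma^{-1}$ here). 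One genuine caution: your ``more algebraic'' alternative via homogeneous expansion at $0$ is not available at the stated level of generality --- the lemma is asserted for an arbitrary $G$-invariant domain $\Omega$, which need not contain $0$ and need not be balanced, so the homogeneous expansion of $f$ at $0$ need not converge on $\Omega$. It does work for $\mb D^d$ and $\mb B_d$, the only cases the paper ultimately uses, but you should commit to the local argument if you want the lemma as stated.
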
We recall an analytic version of Chevalley-Shephard-Todd theorem which allows us to state a few additional properties of the elements of the Hardy space $H^2(\Omega)$. First note that $\C[z_1,\ldots,z_d]$ is a free $\C[z_1,\ldots,z_d]^G$ module of rank $|G|$ \cite[Theorem 1, p. 110]{MR1890629}. Further, one can choose a  basis of $\C[z_1,\ldots,z_d]$
consisting of homogeneous polynomials. We choose the basis in the following manner. For each $\varrho\in \widehat G$,  $\mathbb{P}_{\varrho}(\C[z_1,\ldots,z_d])$ is a free module over $\C[z_1,\ldots,z_d]^G$ of rank $\deg (\varrho)^2$ \cite[Proposition II.5.3., p.28]{Pan}. Clearly, $\C[z_1,\ldots,z_d] = \oplus_{\varrho \in \widehat{G}} \mb P_\varrho (\C[z_1,\ldots,z_d]),$ where the direct sum is orthogonal direct sum borrowed from the Hilbert space structure of $H^2(\Omega)$. Then we can choose a basis $\{\ell_{\varrho,i}: 1 \leq i \leq \deg(\varrho)^2\}$ of $\mathbb{P}_{\varrho}(\C[z_1,\ldots,z_d])$ over $\C[z_1,\ldots,z_d]^G$ for each $\varrho \in \widehat G$ such that together they form a basis $\{\ell_{\varrho,i}: \varrho \in \widehat{G} \text{~and~} 1 \leq i \leq \deg(\varrho)^2\}$ of $\C[z_1,\ldots,z_d]$ over $\C[z_1,\ldots,z_d]^G$. We will work with such a choice for the rest of discussion for our convenience.
\begin{thm}[Analytic CST]\cite[p. 12, Theorem 3.12]{MR4404033}\label{acst}
Let $G$ be a finite group generated by pseudoreflections on $\mb C^d$ and $\Omega \subseteq \C^d$ be a $G$-space. For every holomorphic function $f$ on $\Omega$, there exist unique $G$-invariant holomorphic functions $\{f_{\varrho,i}: 1 \leq i \leq \deg(\varrho)^2\}_{\varrho \in \widehat{G}}$ such that $$f = \sum_{\varrho \in \widehat{G}} \sum_{i=1}^{\deg(\varrho)^2} f_{\varrho,i} \ell_{\varrho,i}.$$
\end{thm}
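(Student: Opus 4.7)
The plan is to deduce the statement from the algebraic Chevalley--Shephard--Todd theorem by combining it with the isotypic decomposition of $\O(\Omega)$. Because each $\sigma\in G$ is $\C$-linear, the induced action $\sigma(f)(z)=f(\sigma^{-1}z)$ preserves holomorphy, and the idempotent
$$\mb P_\varrho f=\frac{\deg(\varrho)}{|G|}\sum_{\sigma\in G}\chi_\varrho(\sigma^{-1})\,f\circ\sigma^{-1}$$
sends $\O(\Omega)$ into itself, giving a vector-space direct sum $\O(\Omega)=\bigoplus_{\varrho\in\widehat G}\mb P_\varrho\O(\Omega)$. Since multiplication by a $G$-invariant holomorphic function commutes with $\mb P_\varrho$, it is enough to fix $\varrho$ and produce, for every $f\in\mb P_\varrho\O(\Omega)$, unique $f_{\varrho,i}\in\O(\Omega)^G$ with $f=\sum_{i=1}^{\deg(\varrho)^2}f_{\varrho,i}\,\ell_{\varrho,i}$.

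For existence I would invert the natural $|G|\times|G|$ system obtained by applying every $\sigma\in G$ to the sought identity. Using $G$-invariance of the $g_{\varrho',j}$, this yields
$$f(\sigma^{-1}z)=\sum_{\varrho',j}g_{\varrho',j}(z)\,\ell_{\varrho',j}(\sigma^{-1}z),\qquad \sigma\in G,$$
with coefficient matrix $M(z)=\bigl(\ell_{\varrho',j}(\sigma^{-1}z)\bigr)_{\sigma,(\varrho',j)}$. The algebraic CST forces $\det M$ to be a non-zero polynomial: at a point $z$ with trivial $G$-stabilizer the $\C$-vector space $\C[z_1,\ldots,z_d]/\mathfrak m_z\,\C[z_1,\ldots,z_d]$, where $\mathfrak m_z$ is the maximal ideal of $G$-invariants vanishing on the orbit of $z$, is $|G|$-dimensional with basis the images of the $\ell_{\varrho',j}$, while simultaneous evaluation at $\{\sigma^{-1}z\}_\sigma$ identifies it with $\C^{|G|}$. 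Hence $\det M$ vanishes exactly on the ramification locus $V$, which by Corollary \ref{Jac} is the union of the reflecting hyperplanes $H_1,\ldots,H_t$. Cramer's rule solves the system on $\Omega\setminus V$, giving holomorphic $g_{\varrho',j}$ there, and their $G$-invariance is forced by the symmetry of the system under $\sigma\mapsto\tau\sigma$.

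It remains to extend the $g_{\varrho',j}$ across $V$. Each $H_k$ is cut out by $\ell_k$, and along $H_k$ the determinant $\det M$ vanishes to order $m_k-1$ (matching the factorization of $J_{\bl\theta}$ in Corollary \ref{Jac}). Decomposing germs along $H_k$ by characters of the cyclic stabilizer $G_k=\langle a_k\rangle$ and using that $f$ lies in the isotypic component of $\varrho$, one checks that the Cramer numerators also absorb the factor $\ell_k^{m_k-1}$; consequently each $g_{\varrho',j}$ is locally bounded near $V$ and extends holomorphically by Riemann's removable-singularity theorem, and the extensions stay $G$-invariant by continuity. Uniqueness is immediate from the same matrix: if $\sum g_{\varrho,i}\ell_{\varrho,i}\equiv 0$ with $g_{\varrho,i}\in\O(\Omega)^G$, then at every $z\in\Omega\setminus V$ the invertibility of $M(z)$ kills every $g_{\varrho,i}(z)$, so they vanish on a dense open subset of $\Omega$ and hence identically.

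The main obstacle I expect is the bookkeeping in the removable-singularity step: one has to track precisely how the entries of $M$ transform under the cyclic stabilizer $G_k$ at a generic point of each $H_k$ in order to confirm that the Cramer numerators absorb the full $\ell_k^{m_k-1}$ factor of $\det M$. This should reduce, after a suitable change of coordinates transversal to $H_k$, to a one-variable computation with the character $\chi_\varrho|_{G_k}$.
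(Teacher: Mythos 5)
The paper itself offers no proof of this theorem --- it is quoted verbatim from the cited reference --- so your argument has to stand on its own. Your overall strategy (isotypic decomposition of $\O(\Omega)$, the $|G|\times|G|$ system with matrix $M(z)=\bigl(\ell_{\varrho',j}(\sigma^{-1}z)\bigr)$, Cramer's rule off the reflecting hyperplanes, Riemann extension across them) is a legitimate classical route, and your uniqueness and $G$-invariance arguments are sound. The problem is that the one step carrying all the content --- that each Cramer numerator is divisible by $\det M$ near the ramification locus, so that the quotients are locally bounded --- is asserted rather than proved, and the quantitative claim offered in its support is false. You say $\det M$ vanishes to order $m_k-1$ along $H_k$, ``matching the factorization of $J_{\bl\theta}$ in Corollary \ref{Jac}.'' But $\det M$ is not $J_{\bl\theta}$: already for $G=\mb Z/m$ acting on $\mb C$ by an $m$-th root of unity, the module basis is $1,z,\dots,z^{m-1}$, $M(z)$ is a Vandermonde matrix in the points of the orbit, and $\det M$ is a constant times $z^{m(m-1)/2}$, vanishing to order $\binom{m}{2}$ at the fixed point rather than $m-1$. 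In general $\deg \det M=\sum_i\binom{d_i}{2}$, which is strictly larger than $\deg J_{\bl\theta}=\sum_i(d_i-1)$ once some $m_k>2$. So the ``one-variable computation'' you defer to is not the computation you describe, and the extension step is a genuine gap: you would have to show that each numerator vanishes along $H_k$ to at least the correct (larger) order of $\det M$, which is essentially equivalent to the divisibility statement the theorem encodes.

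To close the gap you need a local divisibility input that the Cramer setup does not supply by itself. For one-dimensional $\varrho$ this input is exactly the analytic analogue of Stanley's theorem (Lemma \ref{gencz}): a holomorphic germ transforming by $\chi_\varrho$ under the cyclic stabilizer $G_k$ at a generic point of $H_k$ is divisible by $\ell_k^{c_k}$, which one proves by a one-variable Taylor expansion transverse to $H_k$ --- note that the relevant exponent there is $c_k$, not $m_k-1$. For $\deg(\varrho)>1$ one has to argue with the full local ring: apply the algebraic CST to the ring of convergent power series at $p$, which is free over the invariants of the stabilizer $G_p$ (generated, by Steinberg's theorem, by the pseudoreflections fixing $p$), and then patch. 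That local freeness statement is the real content of the analytic CST; once you have it, your global Cramer argument becomes unnecessary except for uniqueness, which you have already handled correctly.
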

\begin{rem}\rm
With such a choice of basis, we have the following:
\begin{enumerate}
    \item For any element $f$ in $H^2(\Omega),$ there exist unique $G$-invariant holomorphic functions $\{f_{\varrho,i}: 1 \leq i \leq \deg(\varrho)^2\}_{\varrho \in \widehat{G}}$ such that $f = \sum_{\varrho \in \widehat{G}} \sum_{i=1}^{\deg(\varrho)^2} f_{\varrho,i} \ell_{\varrho,i}$ and $\mb P_\varrho f = \sum_{i=1}^{\deg(\varrho)^2} f_{\varrho,i} \ell_{\varrho,i}$ for every $\varrho \in \widehat{G}.$
    \item Additionally, for $\varrho \in \widehat{G}_1,$ whenever $\psi = \ell_\varrho~ \widehat{\psi}  \in H^2(\Omega)$ for some $G$-invariant holomorphic function $\widehat{\psi},$ $\psi$ is in $\mb P_\varrho(H^2(\Omega))$ for $\ell_\varrho$ as described in Lemma \ref{gencz} \cite[Lemma 3.1, Remark 3.3]{kag}. 
    \item  For $\varrho \in \widehat{G}_1,$ any other choice of basis of $\mathbb{P}_{\varrho}(\C[z_1,\ldots,z_d])$ as a free module over $\C[z_1,\ldots,z_d]^G$ is a constant multiple of $\ell_\varrho.$
\end{enumerate}
\end{rem}

\subsection{More on the weighted Hardy space:}
\begin{lem}\label{iden}
The spaces $L^2_\varrho(\del\bl \theta(\Omega))$ and $H^2_\varrho(\bl \theta(\Omega))$ are isometrically isomorphic to $R^G_{\varrho}(L^2(\del\Omega))$ and $R^G_{\varrho}(H^2(\Omega)),$ respectively.
\end{lem}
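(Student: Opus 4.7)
The natural candidate for the isomorphism is multiplication by $\ell_\varrho$ combined with composition with $\bl\theta$. Concretely, I would define
$$\Phi: L^2_\varrho(\partial\bl\theta(\Omega)) \to L^2(\partial\Omega), \qquad \Phi(f) = \ell_\varrho \cdot (f \circ \bl\theta),$$
and, for the Hardy-space version,
$$\Psi: H^2_\varrho(\bl\theta(\Omega)) \to H^2(\Omega), \qquad \Psi(f) = \tfrac{1}{\sqrt{|G|}}\,\ell_\varrho \cdot (f \circ \bl\theta).$$
The different normalizations reflect the factor $\tfrac{1}{|G|}$ built into the norms \eqref{norm1} and \eqref{norm2} but absent from the $L^2_\varrho$ norm.

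First I would verify that the image lies in the correct relative invariant subspace. Since $\bl\theta$ is $G$-invariant, the pullback $f\circ\bl\theta$ is $G$-invariant, while $\sigma(\ell_\varrho) = \chi_\varrho(\sigma)\ell_\varrho$ by Lemma~\ref{gencz}. Therefore $\sigma(\ell_\varrho\,(f\circ\bl\theta)) = \chi_\varrho(\sigma)\,\ell_\varrho\,(f\circ\bl\theta)$, placing $\Phi(f)$ in $R^G_\varrho(L^2(\partial\Omega))$ and $\Psi(f)$ in $R^G_\varrho(H^2(\Omega))$. The isometry is then an immediate consequence of the definitions: for the $L^2$ case,
$$\|\Phi(f)\|^2_{L^2(\partial\Omega)} = \int_{\partial\Omega}|\ell_\varrho|^2\,|f\circ\bl\theta|^2\,d\Theta = \int_{\partial\bl\theta(\Omega)}|f|^2\,d\Theta_{\varrho,\bl\theta} = \|f\|^2_{L^2_\varrho},$$
by the defining relation \eqref{measure} (and similarly \eqref{measure1} for the ball), and for the Hardy case the $\tfrac{1}{|G|}$ factor in $\Psi$ exactly compensates for the corresponding factor in the norm \eqref{norm1}/\eqref{norm2}.

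The remaining and most substantive step is surjectivity. Given $g \in R^G_\varrho(H^2(\Omega))$, Lemma~\ref{equal2} provides a factorization $g = \ell_\varrho\,\widehat{g}$ where $\widehat{g}$ is a $G$-invariant holomorphic function on $\Omega$. By the third item of the earlier remark, any $G$-invariant holomorphic function on $\Omega$ descends uniquely through $\bl\theta$, giving a holomorphic $f$ on $\bl\theta(\Omega)$ with $\widehat{g} = f\circ\bl\theta$. The isometry computation run in reverse shows $\|f\|^2_{H^2_\varrho} = \tfrac{1}{|G|}\|g\|^2_{H^2(\Omega)} < \infty$, so $f \in H^2_\varrho(\bl\theta(\Omega))$ and $\Psi(f) = g$. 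For the $L^2$ case I would run the analogous argument using Lemma~\ref{equal1} and Lemma~\ref{quo} to produce the factorization $g = \ell_\varrho\,\widehat{g}$, then descend $\widehat{g}$ through $\bl\theta$.

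The only genuine subtlety I anticipate is the descent step in the surjectivity argument for the $L^2$ part: the analytic CST gives a holomorphic descent, but here $\widehat{g}$ is only measurable, so one must justify that a $G$-invariant $L^2$-function on $\partial\Omega$ is of the form $f\circ\bl\theta$ for some measurable $f$ on $\partial\bl\theta(\Omega)$, and that this $f$ automatically lies in $L^2_\varrho(\partial\bl\theta(\Omega))$. This follows from the fact that $\bl\theta$ realizes $\partial\bl\theta(\Omega)$ (up to a measure-zero branch locus) as the quotient of $\partial\Omega$ by $G$, so $G$-invariant measurable functions on $\partial\Omega$ are in bijection with measurable functions on $\partial\bl\theta(\Omega)$, with the $L^2_\varrho$-integrability transferring via \eqref{measure}/\eqref{measure1}. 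Once this descent is in hand, both isomorphisms are established.
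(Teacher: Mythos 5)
Your proposal follows essentially the same route as the paper: the paper uses the single map $\Gamma_\varrho f = \tfrac{1}{\sqrt{|G|}}\,\ell_\varrho\,(f\circ\bl\theta)$ on both spaces, checks it is an isometry directly from the definitions of the norms in \eqref{measure}, \eqref{norm1} and \eqref{norm2}, and gets surjectivity from the factorization $\phi=\ell_\varrho\,\widehat{\phi}\circ\bl\theta$ supplied by Lemmas \ref{equal1}, \ref{quo} and \ref{equal2}, exactly as you do. Your side remark about the $\tfrac{1}{\sqrt{|G|}}$ normalization differing between the $L^2_\varrho$ and $H^2_\varrho$ cases is a fair catch (the paper uses the same constant for both maps), but it is a bookkeeping point that does not change the substance of the argument.
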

\begin{proof}
 It is easy to see from the definition that the maps $\Gamma_{\varrho} : L^2_\varrho(\del\bl \theta(\Omega))  \to R^G_{\varrho}(L^2(\del\Omega))$ defined by \bea\label{gam}\Gamma_{\varrho}f = \frac{1}{\sqrt{|G|}} \ell_\varrho f \circ \bl \theta,\eea and $\Gamma_{\varrho} : H^2_\varrho(\bl \theta(\Omega))  \to R^G_{\varrho}(H^2(\Omega))$ defined by \bea\label{gamo}\Gamma_{\varrho}f = \frac{1}{\sqrt{|G|}} \ell_\varrho f \circ \bl \theta\eea are isomteries. Let $\phi$ be in $R^G_{\varrho}(L^2(\del\Omega)).$ From Lemma \ref{equal1} and Lemma \ref{quo}, we have that there exists a $\widehat{\phi}$ such that $\phi = \ell_\varrho ~ \widehat{\phi} \circ \bl \theta$ .  We are to show that $\widehat{\phi} \in L^2_{\varrho}(\del\bl \theta(\Omega))$ which follows from the observation that the norm of $\widehat{\phi}$ is equal to the norm of $\phi.$ Analogous arguments as above with Lemma \ref{equal2} proves that also $\Gamma_{\varrho} : H^2_\varrho(\bl \theta(\Omega))  \to R^G_{\varrho}(H^2(\Omega))$ is surjective. Thus both the maps are unitary.  
\end{proof}

\subsubsection{Formula for an orthonormal basis of $H^2_\varrho(\bl \theta(\Omega))$} Let $\mb N_0 = \mb N\cup \{0\}.$ For $\bl m = (m_1,\ldots,m_d) \in \mb N_0^d,$ $\bl z^{\bl m} = \prod_{i=1}^d z_i^{m_i}$ for $\bl z=(z_1,\ldots,z_d) \in \mb C^d.$ Note that $\{c_{\bl m}\bl z^{\bl m} : \bl m \in \mb N_0^d \}$ forms an orthonormal basis of $H^2(\Omega),$ where $\{c_{\bl m}\}$ depends with the choice of $\Omega ~ (\mb D^d \text{ or } \mb B_d).$  We denote
\begin{enumerate}
    \item $\m I_\varrho = \{\bl m \in \mb N_0^d : \mb P_\varrho \bl z^{\bl m} \neq 0 \}$  and 
    \item $e_{\bl m}(\bl \theta(\bl z)) = \sqrt{|G|} c_{\bl m}\frac{\mb P_\varrho \bl z^{\bl m}}{\ell_\varrho(\bl z)},$ for $\bl m \in \m I_\varrho.$
\end{enumerate} From Lemma \ref{iden}, it follows that $\{e_{\bl m}: \bl m \in \m I_\varrho\}$ provides an orthonormal basis of $H_\varrho^2(\bl \theta(\Omega)).$

In particular, for $\Omega = \mb D^d$ and $G = \mathfrak{S}_d,$ one has  \begin{enumerate}
    \item $\m I_\sgn = \{\bl m \in \mb N_0^d : 0<m_1<\cdots<m_d \}$  and 
    \item the Schur functions $e_{\bl m}(\bl \theta(\bl z)) = \sqrt{d!} c_{\bl m}\frac{\bl a_{\bl m}(\bl z)}{\prod_{i<j}(z_i-z_j)},$ $\bl m \in \m I_\varrho,$ where $\bl a_{\bl m}(\bl z) = \det\Big((\!(z_i^{m_j})\!)_{i,j=1}^d\Big)$ and $c_{\bl m} = \frac{1}{\sqrt{d!}}.$ For an explicit description, see \cite{MR3043017}.
\end{enumerate}

\subsubsection{Formula for the reproducing kernel of $H^2_\varrho(\bl \theta(\Omega))$} Let the reproducing kernel of $H^2(\Omega)$ be denoted by $S_\Omega.$ For $\varrho \in \widehat{G}_1,$ $\mb P_\varrho(H^2(\Omega))$ is a closed subspace of $H^2(\Omega)$ and the reproducing kernel $S_\varrho$ of $\mb P_\varrho(H^2(\Omega))$ is given by $$S_\varrho(\bl z, \bl w) =\frac{1}{\ell_\varrho(\bl z) \ov{\ell_\varrho(\bl w)}}\frac{1}{|G|}\sum_{\sigma \in G} \chi_{\varrho}(\sigma^{-1}) S_\Omega(\sigma^{-1} \cdot \bl z, \bl w).$$ Since $\Gamma_{\varrho} : H^2_\varrho(\bl \theta(\Omega))  \to \mb P_{\varrho}(H^2(\Omega))$ is unitary, so the reproducing kernel $S_{\varrho,\bl \theta}$ of $H^2_\varrho(\bl \theta(\Omega))$ is given by \Bea S_{\varrho,\bl \theta} (\bl \theta(\bl z), \bl \theta(\bl w)) &=& |G|\frac{1}{\ell_\varrho(\bl z) \ov{\ell_\varrho(\bl w)}} S_\varrho(\bl z, \bl w)\\ &=&\frac{1}{\ell_\varrho(\bl z) \ov{\ell_\varrho(\bl w)}}\sum_{\sigma \in G} \chi_{\varrho}(\sigma^{-1}) S_\Omega(\sigma^{-1} \cdot \bl z, \bl w).\Eea\begin{rem} For a fixed $\bl w,$ \bea\label{ne}\ov{\ell_\varrho(\bl w)} \Gamma_\varrho(S_{\varrho,\bl \theta} (\cdot, \bl \theta(\bl w)))(\bl z) \nonumber&=& \frac{1}{\sqrt{|G|}} \ell_\varrho(\bl z) \ov{\ell_\varrho(\bl w)} S_{\varrho,\bl \theta} (\bl \theta(\bl z), \bl \theta(\bl w)) \\&=& \sqrt{|G|}S_\varrho(\bl z, \bl w). \eea\end{rem}
\begin{lem}\label{equi}
For every one-dimensional $\varrho \in \widehat{G}_1,$ $H^2_\varrho(\bl \theta(\Omega))$ is isometrically embedded in $L^2_\varrho(\del\bl \theta(\Omega)).$ 
\end{lem}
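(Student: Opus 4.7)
The plan is to realize the claimed embedding as a composition of three isometries, two of which are given by Lemma \ref{iden} and the middle one coming from the classical Fatou theorem on $\Omega = \mb D^d$ or $\mb B_d$. Concretely, given $f \in H^2_\varrho(\bl \theta(\Omega))$, I set $F = \Gamma_\varrho(f) = \tfrac{1}{\sqrt{|G|}}\, \ell_\varrho \cdot (f\circ\bl\theta) \in R^G_\varrho(H^2(\Omega))$ by Lemma \ref{iden}. Since $R^G_\varrho(H^2(\Omega)) \subset H^2(\Omega)$ and the classical Hardy space on the polydisc (or ball) embeds isometrically into $L^2$ of its Shilov boundary via radial limits, I obtain a boundary value $F^* \in L^2(\del\Omega)$ with $\|F\|_{H^2(\Omega)} = \|F^*\|_{L^2(\del\Omega)}$.

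Next I need to check that $F^* \in R^G_\varrho(L^2(\del\Omega))$. This is because the $G$-action $\sigma\cdot h(\bl z) = h(\sigma^{-1}\cdot \bl z)$ commutes with taking radial boundary values (each $\sigma \in G$ restricts to a unitary on $\del\Omega$ for both $\Omega = \mb D^d$ and $\Omega = \mb B_d$), so the relation $\sigma(F) = \chi_\varrho(\sigma)F$ on $\Omega$ transfers to $\sigma(F^*) = \chi_\varrho(\sigma) F^*$ a.e. on $\del\Omega$. Applying Lemma \ref{iden} in the reverse direction, I write $F^* = \tfrac{1}{\sqrt{|G|}}\, \ell_\varrho \cdot (\wi f \circ \bl\theta)$ for a uniquely determined $\wi f \in L^2_\varrho(\del\bl\theta(\Omega))$ with $\|\wi f\|_{L^2_\varrho(\del\bl\theta(\Omega))} = \|F^*\|_{L^2(\del\Omega)}$.

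Chaining the three isometries yields
\[
\|f\|_{H^2_\varrho(\bl\theta(\Omega))} \;=\; \|F\|_{H^2(\Omega)} \;=\; \|F^*\|_{L^2(\del\Omega)} \;=\; \|\wi f\|_{L^2_\varrho(\del\bl\theta(\Omega))},
\]
and the assignment $f \mapsto \wi f$ is linear and injective, which is exactly the desired isometric embedding. To finish, I should verify that $\wi f$ is genuinely the radial boundary value of $f$ on $\del\bl\theta(\Omega)$: starting from the defining supremum in \eqref{norm1} (or its ball analogue \eqref{norm2}), I rewrite it via $F(r\, \cdot) = \tfrac{1}{\sqrt{|G|}}\, \ell_\varrho(r\,\cdot) (f\circ\bl\theta)(r\,\cdot)$ and recognize the supremum over $r$ as the $H^2(\Omega)$-norm of $F$. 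Passing to radial limits on both sides identifies $\wi f$ with the $\bl\theta$-pushforward of $F^*/\ell_\varrho$, which is independent of the ambiguity in Lemma \ref{iden}.

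The main conceptual obstacle, rather than computational, is the last identification: namely showing that the boundary object $\wi f$ produced abstractly through the two applications of $\Gamma_\varrho$ agrees with the pointwise a.e.\ radial limit of $f$ on $\del\bl\theta(\Omega)$ (so that the embedding is canonical, not an artifact of the choice of generating polynomial $\ell_\varrho$). This reduces to the fact that $\ell_\varrho$ is nonzero almost everywhere on $\del\Omega$ with respect to the normalized Lebesgue measure, which holds because $\{\ell_\varrho = 0\}$ is a finite union of complex hyperplanes and hence a set of measure zero on either $\mb T^d$ or $S_d$; dividing by $\ell_\varrho$ on the complement is therefore well defined a.e., and the change-of-variables formula \eqref{measure} (or \eqref{measure1}) converts the equality of norms into the identification of $\wi f$ as a genuine boundary value.
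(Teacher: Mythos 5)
Your proposal is correct and takes essentially the same route as the paper: the paper also realizes the embedding as the conjugation $\Gamma_\varrho^{-1}\circ i_\varrho\circ\Gamma_\varrho$ of the natural boundary-value isometry $i_\varrho : \mb P_\varrho(H^2(\Omega)) \to \mb P_\varrho(L^2(\del\Omega))$ by the two unitaries of Lemma \ref{iden}. The only difference is that you spell out what the paper leaves implicit --- the $G$-equivariance of radial limits and the a.e.\ nonvanishing of $\ell_\varrho$ on $\del\Omega$ --- which is a harmless (and welcome) addition, not a change of method.
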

\begin{proof}
We know that $\Gamma_\varrho$ as defined in Equation \eqref{gam} and in Equation \eqref{gamo} are unitary operators. There exists a natural isometric embedding $i_\varrho : \mb P_\varrho(H^2(\Omega)) \to \mb P_\varrho(L^2(\del\Omega)).$
We note that the following diagram commutes:
\[ \begin{tikzcd}
H^2_\varrho(\bl \theta(\Omega))\arrow{r}{\Gamma^{-1}_\varrho\circ i_\varrho\circ\Gamma_\varrho} \arrow[swap]{d}{\Gamma_\varrho} & L^2_{\varrho}(\del\bl \theta(\Omega))  \arrow{d}{\Gamma_\varrho} \\%
\mb P_\varrho(H^2(\Omega)) \arrow{r}{i_\varrho}& \mb P_\varrho(L^2(\del\Omega))
\end{tikzcd}
\]
Thus by the isometry $\Gamma^{-1}_\varrho\circ i_\varrho\circ\Gamma_\varrho,$ $H^2_\varrho(\bl \theta(\Omega))$ is embedded into $L^2_\varrho(\del\bl \theta(\Omega)).$
\end{proof}
So without going to technicality, $H^2_\varrho(\bl \theta(\Omega))$ can be thought of a closed subspace of $L^2_\varrho(\del\bl \theta(\Omega)).$ We note that the following diagram commutes:
\[ \begin{tikzcd}
L^2_{\varrho}(\del\bl \theta(\Omega)) \arrow{r}{P_\varrho} \arrow[swap]{d}{\Gamma_\varrho} & H^2_\varrho(\bl \theta(\Omega)) \arrow{d}{\Gamma_\varrho} \\%
\mb P_\varrho(L^2(\del\Omega)) \arrow{r}{\widetilde{P}_\varrho}& \mb P_\varrho(H^2(\Omega))
\end{tikzcd}
\]
where $\widetilde{P}_\varrho$ and $P_\varrho$ are the associated orthogonal projections. 
Note that for $f \in L^2_{\varrho}(\del\bl \theta(\Omega)),$ we have
\Bea (\Gamma_\varrho P_\varrho f)(\bl z) &=&  \frac{1}{\sqrt{|G|}}  (P_\varrho f \circ \bl \theta)(\bl z)\ell_\varrho(\bl z) \\
&=&\frac{1}{\sqrt{|G|}}\ell_\varrho(\bl z) \inner{f}{S_{\varrho,\bl \theta}(\cdot, \bl\theta(\bl z))} \\&=& \frac{1}{\sqrt{|G|}}\ell_\varrho(\bl z) \inner{\Gamma_\varrho f}{\Gamma_\varrho\big(S_{\varrho,\bl \theta}(\cdot, \bl\theta(\bl z))\big)}\\ &=& \frac{1}{\sqrt{|G|}} \inner{\Gamma_\varrho f}{\sqrt{|G|}S_\varrho (\cdot,\bl z)} = (\widetilde{P}_\varrho \Gamma_\varrho f)(\bl z),\Eea
where the penultimate equality follows from Equation \eqref{ne}.


\section{Toeplitz Operators}

In this section, we study algebraic properties of Toeplitz operators on the Hardy spaces of quotient domains.

Recall that for $\widetilde{u} \in L^\infty(\del\Omega),$ Toeplitz operator $T_{\widetilde{u}}$ on $H^2(\Omega)$ is given by $(T_{\widetilde{u}}f)(\bl z) = \widetilde{P}(\widetilde{u}f)(\bl z) = \inner{\widetilde{u}f}{S_\Omega(\cdot, \bl z)},$ where  $\widetilde{P} : L^2(\del\Omega) \to H^2(\Omega)$ is the orthogonal projection and $S_\Omega$ is the reproducing kernel of $H^2(\Omega).$ For $f \in \mb P_\varrho(H^2(\Omega)),$ $\widetilde{u}f \in \mb P_\varrho(L^2(\del\Omega)),$ then from the orthogonal decomposition of $L^2(\del\Omega)$ in Equation \eqref{l2ortho}, we get $(T_{\widetilde{u}}f)(\bl z) = \inner{\widetilde{u}f}{S_\Omega(\cdot, \bl z)} = \inner{\widetilde{u}f}{S_\varrho(\cdot, \bl z)}=\widetilde{P}_\varrho(\widetilde{u}f)(\bl z).$  Therefore, the subspace $\mb P_\varrho(H^2(\Omega))$ remains invariant under $T_{\widetilde{u}}$ and the restriction operator $T_{\widetilde{u}}: \mb P_\varrho(H^2(\Omega)) \to \mb P_\varrho(H^2(\Omega))$ is given by $(T_{\widetilde{u}}f)= \widetilde{P}_\varrho(\widetilde{u}f).$ Moreover, the orthogonal complement of $\mb P_\varrho(H^2(\Omega))$ is $\oplus_{\varrho' \in \widehat{G}, \varrho' \not\equiv \varrho} \mb P_{\varrho'} (H^2(\Omega))$ from Equation \eqref{decomp}. It follows that $\mb P_{\varrho} (H^2(\Omega))$ is a reducing subspace for $T_{\widetilde{u}}.$ 

\begin{lem}\label{equi1}
Let $  \widetilde{u} \in L^\infty(\del\Omega)$ be a $G$-invariant function such that $\widetilde{u}= u\circ \bl \theta.$ For every $\varrho \in \widehat{G}_1,$ the following diagram commutes:
\[ \begin{tikzcd}
H^2_\varrho(\bl \theta(\Omega)) \arrow{r}{T_u} \arrow[swap]{d}{\Gamma_\varrho} & H^2_\varrho(\bl \theta(\Omega)) \arrow{d}{\Gamma_\varrho} \\%
\mb P_\varrho(H^2(\Omega)) \arrow{r}{T_{\widetilde{u}}}& \mb P_\varrho(H^2(\Omega))
\end{tikzcd}
\] where $\Gamma_\varrho$'s are as defined in Equation \eqref{gam} and in Equation \eqref{gamo}.
\end{lem}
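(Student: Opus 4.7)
The plan is to verify the identity $\Gamma_\varrho T_u f = T_{\widetilde u}\Gamma_\varrho f$ on an arbitrary $f \in H^2_\varrho(\bl\theta(\Omega))$ by splitting each composition into its ``multiplication'' part and its ``projection'' part, and intertwining the two pieces separately.

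First I would establish what one might call the \emph{multiplication intertwining}: for any $f \in L^2_\varrho(\del\bl\theta(\Omega))$,
$$\Gamma_\varrho(u\cdot f) = \tfrac{1}{\sqrt{|G|}}\,\ell_\varrho\cdot(uf)\circ\bl\theta = \tfrac{1}{\sqrt{|G|}}\,\ell_\varrho\cdot(u\circ\bl\theta)\cdot(f\circ\bl\theta) = \widetilde u\cdot\Gamma_\varrho f,$$
using $\widetilde u = u\circ\bl\theta$ and the definition of $\Gamma_\varrho$ from Equation \eqref{gam}. Note that $\widetilde u\cdot\Gamma_\varrho f$ indeed lies in $\mb P_\varrho(L^2(\del\Omega))$, because $\widetilde u$ is $G$-invariant whereas $\Gamma_\varrho f$ transforms by the character $\chi_\varrho$, so their product transforms by $\chi_\varrho$.

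Next I would invoke the commutative diagram established in the subsection immediately preceding the lemma, which reads $\Gamma_\varrho\circ P_\varrho = \widetilde P_\varrho\circ\Gamma_\varrho$ on $L^2_\varrho(\del\bl\theta(\Omega))$. Combining this with the multiplication intertwining gives, for $f \in H^2_\varrho(\bl\theta(\Omega))$,
$$\Gamma_\varrho T_u f = \Gamma_\varrho P_\varrho(uf) = \widetilde P_\varrho\,\Gamma_\varrho(uf) = \widetilde P_\varrho(\widetilde u\cdot\Gamma_\varrho f) = T_{\widetilde u}\Gamma_\varrho f,$$
where the last equality uses the opening paragraph of Section 3, which observed that $T_{\widetilde u}$ restricted to the reducing subspace $\mb P_\varrho(H^2(\Omega))$ acts as $\widetilde P_\varrho\circ M_{\widetilde u}$.

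The only point that genuinely needs care is verifying that multiplication by $u$ sends $H^2_\varrho(\bl\theta(\Omega))$ into $L^2_\varrho(\del\bl\theta(\Omega))$, i.e.\ that $u\in L^\infty(\del\bl\theta(\Omega))$. This follows from $\widetilde u\in L^\infty(\del\Omega)$ together with surjectivity of $\bl\theta$ onto its image on the Shilov boundary and the $G$-invariance of $\widetilde u$, so essentially no measure-theoretic information is lost when passing from $\widetilde u$ to $u$. I do not expect this to be a real obstacle; once it is recorded, the remainder is a bookkeeping exercise stitching the pointwise multiplication identity together with the projection intertwining already in place.
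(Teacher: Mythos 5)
Your proposal is correct and follows essentially the same route as the paper: the pointwise identity $\Gamma_\varrho(uf)=\widetilde{u}\,\Gamma_\varrho f$ combined with the projection intertwining $\Gamma_\varrho P_\varrho=\widetilde{P}_\varrho\Gamma_\varrho$ established just before the lemma, then the chain $\Gamma_\varrho T_u f=\Gamma_\varrho P_\varrho(uf)=\widetilde{P}_\varrho(\widetilde{u}\,\Gamma_\varrho f)=T_{\widetilde{u}}\Gamma_\varrho f$. Your additional remarks on why $\widetilde{u}\,\Gamma_\varrho f$ lands in $\mb P_\varrho(L^2(\del\Omega))$ and on the boundedness of $u$ are sensible bookkeeping that the paper leaves implicit.
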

\begin{proof}
Note that $\Gamma_\varrho( uf) = \frac{1}{\sqrt{|G|}}  (u \circ \bl \theta)(f \circ \bl \theta)~\ell_\varrho = \widetilde{u}~ \Gamma_\varrho(f).$ From Lemma \ref{equi}, we have
\Bea \Gamma_\varrho T_uf= \Gamma_\varrho(P_\varrho(uf)) = \widetilde{P}_\varrho (\Gamma_\varrho( uf)) =  \widetilde{P}_\varrho (\widetilde{u} \Gamma_\varrho( f)) = T_{\widetilde{u}}\Gamma_\varrho f.\Eea
\end{proof}
Let $\tr:G \to \mb C^*$ denote the trivial representation of the group $G.$ If $\widetilde{u}$ is holomorphic, it is easy to see that the subspace $\ell_\varrho \cdot \mb P_{\tr}(H^2(\Omega))$ is invariant under the restriction of the operator $T_{\widetilde{u}}$ on $\mb P_\varrho(H^2(\Omega)).$
We prove that this continues to hold even when $\widetilde{u}$ is only a bounded function.

Let $f \in \mb P_{\tr}(H^2(\Omega)),$ then $ \ell_\varrho f \in \ell_\varrho \cdot \mb P_{\tr}(H^2(\Omega)) \subseteq R^G_\varrho(H^2(\Omega)) = \mb P_\varrho(H^2(\Omega)),$ where the last equality follows from \cite[Lemma 3.1]{kag}. The density of $G$-invariant polynomials in $\mb P_{\tr}(H^2(\Omega))$ implies that $\ell_\varrho \cdot \mb P_{\tr}(H^2(\Omega))$ is dense in $\mb P_\varrho(H^2(\Omega)).$

We consider $f \in \mb P_\varrho(H^2(\Omega))$ such that $f = \ell_\varrho f_\varrho$ for $f_\varrho \in \mb P_{\tr}(H^2(\Omega)).$  Then $\widetilde{u} f_\varrho \in \mb P_{\rm tr}(L_\omega^2(\Omega))$ using Lemma \ref{equal1} and the following holds: \Bea (T_{\widetilde{u}}f)(\bl z) = \inner{\widetilde{u} f}{S_\Omega(\cdot, \bl z)} = \inner{\widetilde{u} f_\varrho}{M^*_{\ell_\varrho} S_\Omega(\cdot, \bl z)} &=& \ell_\varrho(\bl z) \inner{\widetilde{u} f_\varrho}{S_\Omega(\cdot, \bl z)}  \\ &=& \ell_\varrho(\bl z) \inner{\widetilde{u} f_\varrho}{S_{\tr}(\cdot, \bl z)}  \\ &=& \ell_\varrho(\bl z) \widetilde{P}_{\tr}(\widetilde{u} f_\varrho) (\bl z), \Eea where $S_{\tr}$ denotes the reproducing kernel of $\mb P_{\rm tr}(H^2(\Omega)).$ Therefore, we have that $T_{\widetilde{u}}(\ell_\varrho \cdot \mb P_{\tr}(H^2(\Omega)) \subseteq \ell_\varrho \cdot \mb P_{\tr}(H^2(\Omega))$ 
for every $\varrho \in \widehat{G}_1.$

This result can be extended to any representation $\varrho \in \widehat{G}$ with $\deg( \varrho) > 1.$ 
We consider a basis $\{\ell_{\varrho,i}\}_{i=1}^{\deg( \varrho)^2}$ of $\mb P_\varrho (\mb C[z_1,\ldots,z_d])$ as a free module over $\mb C[z_1,\ldots,z_d]^G.$ Since $\sum_{i=1}^{\deg( \varrho)^2} \ell_{\varrho,i} \cdot \mb C[z_1,\ldots,z_d]^G $ is dense in $\sum_{i=1}^{\deg( \varrho)^2} \ell_{\varrho,i} \cdot \mb P_{\rm tr} (H^2(\Omega))$ and $\sum_{i=1}^{\deg( \varrho)^2} \ell_{\varrho,i} \cdot \mb P_{\rm tr} (H^2(\Omega))$ is contained in $\mb P_\varrho (H^2(\Omega)),$ we get that $\sum_{i=1}^{\deg( \varrho)^2} \ell_{\varrho,i} \cdot \mb P_{\rm tr} (H^2(\Omega))$ is dense in $\mb P_\varrho (H^2(\Omega)).$ For $f = \sum_{i=1}^{\deg( \varrho)^2} \ell_{\varrho,i} f_{\varrho,i},$ such that  $f_{\varrho,i} \in \mb P_{\rm tr} (H^2(\Omega)),$ we conclude the following:
\bea\label{sum}
\nonumber(T_{\widetilde{u}}f)(\bl z) = \inner{\widetilde{u} f}{S_\Omega(\cdot, \bl z)} &=& \inner{\sum_{i=1}^{\deg( \varrho)^2} \ell_{\varrho,i} \widetilde{u} f_{\varrho,i}}{S_\Omega(\cdot, \bl z)} \\&=&\nonumber \sum_{i=1}^{\deg( \varrho)^2} \inner{\widetilde{u} f_{\varrho,i}}{M^*_{\ell_{\varrho,i}}S_\Omega(\cdot, \bl z)} \\&=& \nonumber \sum_{i=1}^{\deg( \varrho)^2} \ell_{\varrho,i}(\bl z) \inner{\widetilde{u} f_{\varrho,i}}{S_\Omega(\cdot, \bl z)} \\ &=&\nonumber \sum_{i=1}^{\deg( \varrho)^2} \ell_{\varrho,i}(\bl z) \inner{\widetilde{u} f_{\varrho,i}}{S_{\tr}(\cdot, \bl z)} \\ &=& \sum_{i=1}^{\deg( \varrho)^2} \ell_{\varrho,i}(\bl z) \widetilde{P}_{\tr}(\widetilde{u} f_{\varrho,i})(\bl z).
\eea
Hence, each $\sum_{i=1}^{\deg( \varrho)^2} \ell_{\varrho,i} \cdot \mb P_{\rm tr} (H^2(\Omega))$ remains invariant for the operator $T_{\widetilde{u}}.$


\begin{thm}
 Suppose that $G$ is a finite pseudoreflection group, the bounded domain $\Omega \subseteq \mb C^d$ is a $G$-space and $\bl \theta: \Omega \to \bl \theta(\Omega)$ is a basic polynomial map associated to the group $G.$ Let $\widetilde{u},\widetilde{v}$ and $\widetilde{q}$ be $G$-invariant functions in $L^\infty(\del\Omega)$ such that $\widetilde{u} = u \circ \bl \theta$, $\widetilde{v} = v \circ \bl \theta$ and $\widetilde{q} = q \circ \bl \theta$. 
 \begin{enumerate}
     \item[\rm 1a.] Suppose that for a one-dimensional representation $\mu$ of $G,$ $T_uT_v=T_q$ on $H^2_\mu(\bl \theta(\Omega)),$ then \begin{enumerate}
    \item[{\rm (i)}] $T_uT_v=T_q$ on $H^2_\varrho(\bl \theta(\Omega))$ for every one dimensional representation $\varrho$ of $G,$ and
    \item[{\rm (ii)}] $T_{\widetilde{u}}T_{\widetilde{v}}=T_{\widetilde{q}}$ on $H^2(\Omega).$
\end{enumerate}
\item[\rm 1b.] Conversely, if $T_{\widetilde{u}}T_{\widetilde{v}}=T_{\widetilde{q}}$ on $H^2(\Omega),$ then $T_uT_v=T_q$ on $H^2_\varrho(\bl \theta(\Omega))$ for every one dimensional representation $\varrho.$
\item[\rm 2a.] Suppose that for a one-dimensional representation $\mu$ of $G,$ $T_uT_v=T_vT_u$ on $H^2_\mu(\bl \theta(\Omega)),$ then \begin{enumerate}
    \item[{\rm (i)}] $T_uT_v=T_vT_u$ on $H^2_\varrho(\bl \theta(\Omega))$ for every one dimensional representation $\varrho$ of $G,$ and
    \item[{\rm (ii)}] $T_{\widetilde{u}}T_{\widetilde{v}}=T_{\widetilde{v}}T_{\widetilde{u}}$ on $H^2(\Omega).$
\end{enumerate}
\item[\rm 2b.] Conversely, if $T_{\widetilde{u}}T_{\widetilde{v}}=T_{\widetilde{v}}T_{\widetilde{u}}$ on $H^2(\Omega),$ then $T_uT_v=T_vT_u$ on $H^2_\varrho(\bl \theta(\Omega))$ for every one-dimensional representation $\varrho.$
\item[\rm 3a.] Suppose that for a one-dimensional representation $\mu$ of $G,$ $T_u$ is compact on $H^2_\mu(\bl \theta(\Omega)),$ then 
\begin{enumerate}
    \item[{\rm (i)}] $T_u$ is compact on $H^2_\varrho(\bl \theta(\Omega))$ for every one-dimensional representation $\varrho$ of $G,$ and
    \item[{\rm (ii)}] $T_{\widetilde{u}}$ is compact on $H^2(\Omega).$
\end{enumerate}
\item[\rm 3b.] Conversely, if $T_{\widetilde{u}}$ is compact on $H^2(\Omega),$ then $T_u$ is compact on $H^2_\varrho(\bl \theta(\Omega))$ for every one dimensional representation $\varrho.$
 \end{enumerate}
\end{thm}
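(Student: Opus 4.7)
The plan is to route everything through the trivial-representation block $\mb P_{\tr}(H^2(\Omega))$ using three tools already established in the paper: (i) the unitary equivalence $\Gamma_\varrho\colon H^2_\varrho(\bl\theta(\Omega))\to\mb P_\varrho(H^2(\Omega))$ of Lemma \ref{equi1}, which intertwines $T_u$ with $T_{\widetilde u}$ for every $\varrho\in\widehat G_1$; (ii) the finite orthogonal decomposition $H^2(\Omega)=\oplus_{\varrho\in\widehat G}\mb P_\varrho(H^2(\Omega))$, under which the $G$-invariant-symbol Toeplitz operator $T_{\widetilde u}$ is block diagonal and each $\mb P_\varrho(H^2(\Omega))$ is a reducing subspace; and (iii) the structural identity \eqref{sum}, which realises $T_{\widetilde u}|_{\mb P_\varrho(H^2(\Omega))}$ on the dense subspace $\sum_i\ell_{\varrho,i}\cdot\mb P_{\tr}(H^2(\Omega))$ as a simultaneous action of $A:=T_{\widetilde u}|_{\mb P_{\tr}(H^2(\Omega))}$ on the coefficients.

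The three converse directions 1b, 2b, 3b are immediate: because each $\mb P_\mu(H^2(\Omega))$ is reducing for $T_{\widetilde u},T_{\widetilde v},T_{\widetilde q}$, any product identity, commutation identity, or compactness statement on $H^2(\Omega)$ restricts to the same statement on $\mb P_\mu(H^2(\Omega))$, and $\Gamma_\mu$ transports it to $H^2_\mu(\bl\theta(\Omega))$.

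For 1a and 2a, write $A,B,C$ for the restrictions of $T_{\widetilde u},T_{\widetilde v},T_{\widetilde q}$ to $\mb P_{\tr}(H^2(\Omega))$. Transferring the hypothesis via $\Gamma_\mu$ and applying \eqref{sum} with $\deg\mu=1$ yields, on the dense subspace $\ell_\mu\cdot\mb P_{\tr}(H^2(\Omega))$, the pointwise identity $\ell_\mu(ABf)=\ell_\mu(Cf)$ (respectively $\ell_\mu(ABf)=\ell_\mu(BAf)$) for every $f\in\mb P_{\tr}(H^2(\Omega))$. Injectivity of multiplication by $\ell_\mu$ on holomorphic functions (the uniqueness clause of Theorem \ref{acst}) forces $AB=C$ (respectively $AB=BA$) on $\mb P_{\tr}(H^2(\Omega))$. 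This identity is $\varrho$-free; reinserting it into \eqref{sum} for an arbitrary $\varrho\in\widehat G$ produces the same identity on the dense subspace of every $\mb P_\varrho(H^2(\Omega))$, and boundedness plus density extend it to all of $\mb P_\varrho(H^2(\Omega))$; the block-diagonal form then assembles it into the identity on $H^2(\Omega)$. Applying $\Gamma_\varrho$ returns the identity to $H^2_\varrho(\bl\theta(\Omega))$ for every $\varrho\in\widehat G_1$.

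The statement 3a follows the same skeleton but is technically the most delicate. After transferring compactness to $\mb P_\mu(H^2(\Omega))$ via $\Gamma_\mu$, the central task is to show that $A$ is compact on $\mb P_{\tr}(H^2(\Omega))$. The plan is to use the intertwining $T_{\widetilde u}|_{\mb P_\mu}\,M_{\ell_\mu}=M_{\ell_\mu}\,A$, which holds because the Szeg\H{o} projection $\widetilde P$ commutes with multiplication by bounded holomorphic functions, together with a Montel-type argument: given a bounded sequence $\{g_n\}\subset\mb P_{\tr}(H^2(\Omega))$, the sequence $\{\ell_\mu A g_n\}$ is bounded in $\mb P_\mu(H^2(\Omega))$ and therefore admits a subsequence converging strongly in $H^2(\Omega)$ to some limit of the form $\ell_\mu\phi$; weak compactness of $\{Ag_n\}$ in $\mb P_{\tr}(H^2(\Omega))$ combined with pointwise (uniform-on-compact-subsets) convergence produced by Cauchy estimates is then leveraged to promote the weak limit to strong convergence of $Ag_n$ in $\mb P_{\tr}(H^2(\Omega))$. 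Once $A$ is known to be compact, \eqref{sum} expresses $T_{\widetilde u}|_{\mb P_\varrho(H^2(\Omega))}$ on its dense subspace as a finite combination of $A$ with the bounded multiplication operators $M_{\ell_{\varrho,i}}$; the standard density lemma (a bounded operator whose restriction to a dense subspace is compact is itself compact) yields compactness on each $\mb P_\varrho(H^2(\Omega))$, hence on $H^2(\Omega)$ by finiteness of $\widehat G$, and $\Gamma_\varrho$ concludes. The main obstacle is precisely this Montel step, since $M_{\ell_\mu}$ is bounded and injective with dense image but lacks a bounded left inverse because $\ell_\mu$ vanishes on the reflecting hyperplanes, so strong convergence in $\mb P_\mu(H^2(\Omega))$ must be translated back into $\mb P_{\tr}(H^2(\Omega))$ via pointwise-on-compacts convergence rather than by a direct norm comparison.
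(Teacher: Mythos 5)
Your treatment of 1a, 1b, 2a, 2b and 3b is correct and is essentially the intended route: the paper itself only sketches 1a--2b (deferring to the Bergman-space analogues in \cite{toepop2}) and disposes of 3b by restricting to the reducing subspace $\mb P_\mu(H^2(\Omega))$ and conjugating with $\Gamma_\mu$, exactly as you do. Your derivation of $AB=C$ (resp.\ $AB=BA$) on $\mb P_{\tr}(H^2(\Omega))$ from the single block $\mu$, via \eqref{sum} with $\deg\mu=1$ and the injectivity of multiplication by the nonzero holomorphic function $\ell_\mu$, followed by re-insertion into \eqref{sum} for arbitrary $\varrho\in\widehat{G}$ and a density argument, is sound and matches the mechanism the paper sets up before the theorem.

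The gap is in 3a, at precisely the step you flag: passing from norm convergence of a subsequence of $\{\ell_\mu Ag_n\}$ in $\mb P_\mu(H^2(\Omega))$, where $A=T_{\widetilde{u}}|_{\mb P_{\tr}(H^2(\Omega))}$, to norm convergence of a subsequence of $\{Ag_n\}$ in $\mb P_{\tr}(H^2(\Omega))$. The mechanism you propose --- a weak limit of $\{Ag_n\}$ upgraded to a strong limit using uniform-on-compacts convergence --- is not valid as stated: in $H^2(\mb D)$ the sequence $z^n$ converges weakly to $0$ and locally uniformly to $0$ but not in norm, so without some quantitative input (convergence of norms, or a lower bound for $M_{\ell_\mu}$ on the trivial block) the promotion fails, and none is supplied. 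The paper closes this step differently, by asserting the two-sided estimate $c_{\varrho,i}\norm{h}<\norm{h\ell_{\varrho,i}}<C_{\varrho,i}\norm{h}$ for $h\in\mb P_{\tr}(H^2(\Omega))$, deduced from a claimed pointwise bound $c_{\varrho,i}<|\ell_{\varrho,i}|<C_{\varrho,i}$ on $\del\Omega\setminus Z(\ell_{\varrho,i})$; but be aware that the lower bound is exactly the delicate point: for $\Omega=\mb D^2$, $G=\mathfrak{S}_2$, $\ell_{\sgn}=z_1-z_2$ and the symmetric polynomials $h_n=\sum_{k=0}^{n}z_1^kz_2^{n-k}$ one has $\norm{(z_1-z_2)h_n}^2=2$ while $\norm{h_n}^2=n+1$, so $M_{\ell_{\sgn}}$ is not bounded below on $\mb P_{\tr}(H^2(\mb D^2))$ and $|\ell_{\sgn}|$ has no positive lower bound on $\mb T^2\setminus Z(\ell_{\sgn})$. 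Your instinct that this is the crux of 3a is therefore right, but neither your Montel sketch nor an appeal to the paper's norm equivalence closes it; this implication needs a genuinely different argument (or additional hypotheses) at this point, while the remaining parts of your proposal stand.
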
 \begin{proof}
Proof of 1a - 2b are analogous to that of \cite[Theorem 1.1, Theorem 1.6]{toepop2}, so a detailed proof will be added in future version.

We note that 3b is straightforward from Lemma \ref{equi}. We prove 3a now. Let $\{f_n\}_{n\in \mb N}$ be a bounded sequence in $H^2(\Omega)$ such that $\norm{f_n} <c,\,n\in \mb N,$ for a positive real $c.$ We are to show that $\{T_uf_n\}$ has a convergent subsequence in $H^2(\Omega)$. From analytic version of CST, we write \Bea f_n = \sum_{\varrho \in \widehat{G}} \sum_{i=1}^{\deg(\varrho)^2} f_{n,\varrho,i} \ell_{\varrho,i}.\Eea  Since for two positive real numbers $c_{\varrho,i}$ and $C_{\varrho,i},$ $c_{\varrho,i} < |\ell_{\varrho,i}(\bl z)| < C_{\varrho,i}$ on $\del\Omega \setminus Z(\ell_{\varrho,i}),$ we get $c_{\varrho,i}\norm{h} <\norm{h\ell_{\varrho,i}} < C_{\varrho,i} \norm{h},$ for any $h \in \mb P_{\tr}(H^2(\Omega)).$ Thus we note that $f_{n,\varrho,i} \in \mb P_{\tr}(H^2(\Omega))$ and $\norm{f_{n,\varrho,i}} < k_{\varrho,i}$ for some positive real number $k_{\varrho,i}.$ Then $\{\ell_\mu f_{n,\varrho,i} : n \in \mb N\}$ is bounded for each $\varrho$ and $i.$ So there exists a convergent subsequence $\{\ell_\mu T_{\widetilde{u}}f_{n_{k},\varrho,i} : k \in \mb N\}$ for each $\varrho$ and $i.$ Clearly, $\{\sum_{\varrho \in \widehat{G}} \sum_{i=1}^{\deg(\varrho)^2}\ell_\varrho T_{\widetilde{u}}f_{n_{k},\varrho,i} : k \in \mb N\}$ is a convergent sequence. Since $T_{\widetilde{u}}(\sum_{\varrho \in \widehat{G}} \sum_{i=1}^{\deg(\varrho)^2}\ell_\varrho f_{n_{k},\varrho,i}) = \sum_{\varrho \in \widehat{G}} \sum_{i=1}^{\deg(\varrho)^2}\ell_\varrho T_{\widetilde{u}}f_{n_{k},\varrho,i},$ $T_{\widetilde{u}}$ is compact on $H^2(\Omega).$ 

So the restriction operator $T_{\widetilde{u}}$ on $\mb P_\varrho (H^2(\Omega))$ is also compact and using Lemma \ref{equi1} we conclude 3a (i).

\end{proof}

\begin{cor}
For every $\varrho \in \widehat{G},$ $\mb P_\varrho H^2(\Omega) = \sum_{i=1}^{(\deg \varrho)^2} \ell_{\varrho, i} \cdot \mb P_{\tr} H^2(\Omega).$
\end{cor}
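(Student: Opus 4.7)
The plan is to prove the identity by double inclusion, using the Analytic CST (Theorem \ref{acst}) and the isotypic/module structure developed in Section 2.

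First I would settle the easier inclusion $\sum_{i=1}^{(\deg \varrho)^2} \ell_{\varrho,i}\cdot \mb P_{\tr} H^2(\Omega) \subseteq \mb P_\varrho H^2(\Omega)$. Pick $h \in \mb P_{\tr} H^2(\Omega)$ and a basis element $\ell_{\varrho,i}$ of $\mb P_\varrho(\mb C[z_1,\ldots,z_d])$ over $\mb C[z_1,\ldots,z_d]^G$. Since $\ell_{\varrho,i}$ is a polynomial it is bounded on $\ov\Omega$, hence $\ell_{\varrho,i} h \in H^2(\Omega)$. Using the $G$-invariance of $h$ one has $\sigma(\ell_{\varrho,i} h) = \sigma(\ell_{\varrho,i})\,h$, and because $\mb P_\varrho(\mb C[z_1,\ldots,z_d])$ is $G$-stable, I would apply the projection formula to conclude
\[
\mb P_\varrho(\ell_{\varrho,i} h) = \Big(\frac{\deg \varrho}{|G|}\sum_{\sigma\in G}\chi_\varrho(\sigma^{-1})\sigma(\ell_{\varrho,i})\Big)h = \mb P_\varrho(\ell_{\varrho,i})\,h = \ell_{\varrho,i} h,
\]
which places $\ell_{\varrho,i} h$ in $\mb P_\varrho H^2(\Omega)$.

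For the reverse inclusion I would take an arbitrary $f \in \mb P_\varrho H^2(\Omega)$ and invoke the Analytic CST to write $f = \sum_{\rho \in \widehat G}\sum_{j=1}^{(\deg \rho)^2} f_{\rho,j}\,\ell_{\rho,j}$ uniquely, with each $f_{\rho,j}$ a $G$-invariant holomorphic function on $\Omega$. The orthogonality of the decomposition $H^2(\Omega) = \oplus_\rho \mb P_\rho H^2(\Omega)$ together with the identity $\mb P_\varrho f = \sum_j f_{\varrho,j}\ell_{\varrho,j}$ and $\mb P_\varrho f = f$ forces $f = \sum_{j=1}^{(\deg \varrho)^2} f_{\varrho,j}\,\ell_{\varrho,j}$. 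The remaining task is to upgrade each $f_{\varrho,j}$, a priori only $G$-invariant holomorphic, to a bona fide element of $\mb P_{\tr} H^2(\Omega)$.

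This upgrade is exactly where the genuine work lies, and I expect it to be the main obstacle. My plan is to reuse the argument already employed in the proof of the preceding theorem: since $|\ell_{\varrho,j}|$ is bounded above on $\ov\Omega$ and bounded below on $\del\Omega \setminus Z(\ell_{\varrho,j})$, one obtains the two-sided estimate $c_{\varrho,j}\|h\| \le \|\ell_{\varrho,j} h\| \le C_{\varrho,j}\|h\|$ valid for every $h \in \mb P_{\tr} H^2(\Omega)$. Combined with the uniqueness of the Analytic CST expansion, this transports the bound $\|f\| < \infty$ to the bound $\|f_{\varrho,j}\| < \infty$ for each $j$, giving $f_{\varrho,j} \in \mb P_{\tr} H^2(\Omega)$. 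The delicate point to be treated carefully is the lower estimate on $|\ell_{\varrho,j}|$, because $\ell_{\varrho,j}$ can vanish on proper subsets of $\del\Omega$; handling this requires leveraging the $G$-invariance of $h$ and the fact that the zero locus is a union of reflecting hyperplane pieces, so that the uniform lower bound is effectively realized on the image of $\mb P_{\tr} H^2(\Omega)$.
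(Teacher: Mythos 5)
You have put your finger on the right pressure point, and your architecture (easy inclusion via the projection formula, hard inclusion via the Analytic CST plus a two-sided norm estimate) is exactly the route the paper itself takes inside the proof of the preceding theorem. The first inclusion is unproblematic. The gap is the lower estimate, and calling it ``delicate'' does not repair it: the inequality $c_{\varrho,j}\norm{h}\le\norm{\ell_{\varrho,j}h}$ for all $h\in\mb P_{\tr}H^2(\Omega)$ is false, and your suggestion that the lower bound is ``effectively realized on the image of $\mb P_{\tr}H^2(\Omega)$'' is contradicted by an explicit $G$-invariant family. Take $\Omega=\mb D^2$, $G=\mathfrak{S}_2$, $\varrho=\sgn$, $\ell_\varrho(\bl z)=z_1-z_2$, and $h_N=\sum_{m+n=N}z_1^mz_2^n$. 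Each $h_N$ is symmetric with $\norm{h_N}^2=N+1$, yet $(z_1-z_2)h_N=z_1^{N+1}-z_2^{N+1}$ has squared norm $2$, so $\norm{\ell_\varrho h_N}/\norm{h_N}\to 0$ along $G$-invariant vectors. (The pointwise bound $c_{\varrho,j}<|\ell_{\varrho,j}(\bl z)|$ on $\del\Omega\setminus Z(\ell_{\varrho,j})$ asserted in the paper is likewise impossible by continuity whenever $\ell_{\varrho,j}$ vanishes somewhere on $\del\Omega$.)

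Moreover the failure is not just a failure of the method but of the statement: $f=\sum_{N\ge1}\tfrac{1}{N+1}(z_1^{N+1}-z_2^{N+1})$ lies in $\mb P_{\sgn}H^2(\mb D^2)$ since $\norm{f}^2=\sum_N 2/(N+1)^2<\infty$, while its unique CST quotient $f/(z_1-z_2)=\sum_N\tfrac{1}{N+1}h_N$ has squared norm $\sum_N 1/(N+1)=\infty$, hence is not in $\mb P_{\tr}H^2(\mb D^2)$. So $\mb P_{\sgn}H^2(\mb D^2)\neq(z_1-z_2)\cdot\mb P_{\tr}H^2(\mb D^2)$. What the earlier lemmas actually deliver is that $\mb P_\varrho H^2(\Omega)$ is $\ell_\varrho$ times the \emph{weighted} space $H^2_\varrho(\bl\theta(\Omega))$ pulled back by $\bl\theta$ (unitarity of $\Gamma_\varrho$), and that $\sum_i\ell_{\varrho,i}\cdot\mb P_{\tr}H^2(\Omega)$ is \emph{dense} in $\mb P_\varrho H^2(\Omega)$; the step you flagged, which would upgrade density to equality, cannot be carried out. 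The corollary needs either a closure on the right-hand side or a reformulation in terms of the weighted invariant subspaces.
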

\begin{thm}
    If $T$ is a Toeplitz operator on $H^2_\varrho(\bl \theta(\mb D^d))$ for a one-dimensional representation $\varrho,$ then $M_{z_i}^*TM_{z_i} =T,$ where $M_{z_i}$ denotes the coordinate multiplication on $H^2_\varrho(\bl \theta(\mb D^d))$ by the $i$-th coordinate function of $\bl \theta(\mb D^d).$ 
\end{thm}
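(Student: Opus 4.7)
The plan is to transport the identity to the covering Hardy space $H^2(\mb D^d)$ via the isometric isomorphism $\Gamma_\varrho$ of Lemma \ref{iden}, and then invoke the classical Brown--Halmos-type relation there. By Lemma \ref{equi1}, $T = T_u$ on $H^2_\varrho(\bl \theta(\mb D^d))$ is unitarily equivalent to $T_{\widetilde{u}}$ acting on $\mb P_\varrho(H^2(\mb D^d))$, where $\widetilde{u} = u\circ\bl \theta$ is $G$-invariant. The explicit formula $\Gamma_\varrho f = \frac{1}{\sqrt{|G|}}\ell_\varrho\cdot f\circ\bl \theta$, combined with the identity $z_i\circ\bl \theta = \theta_i$, shows that the coordinate multiplication $M_{z_i}$ on $H^2_\varrho(\bl \theta(\mb D^d))$ is conjugated by $\Gamma_\varrho$ to multiplication by $\theta_i$ on $\mb P_\varrho(H^2(\mb D^d))$. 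Hence the theorem reduces to the claim
\[
M_{\theta_i}^*\, T_{\widetilde{u}}\, M_{\theta_i} \;=\; T_{\widetilde{u}} \qquad\text{on } \mb P_\varrho(H^2(\mb D^d)).
\]

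To establish this reduced identity, I would pair both sides with $f, g \in \mb P_\varrho(H^2(\mb D^d))$, viewed inside $L^2(\mb T^d)$. Since $\theta_i g \in H^2(\mb D^d)$, the left-hand side becomes $\langle T_{\widetilde{u}}\, \theta_i f,\, \theta_i g\rangle = \int_{\mb T^d} \widetilde{u}\,|\theta_i|^2\, f\bar g\, d\Theta$, while the right-hand side is $\int_{\mb T^d}\widetilde{u}\, f\bar g\, d\Theta$. Because $\widetilde{u}$ is $G$-invariant and $f, g$ are $\varrho$-equivariant, the product $f\bar g$ is $G$-invariant, so the integrand of the difference is itself a $G$-invariant function on $\mb T^d$. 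The plan is then to push the integral down to the Shilov boundary $\del\bl \theta(\mb D^d) = \bl \theta(\mb T^d)$ by means of the measure $d\Theta_{\varrho,\bl \theta}$ from \eqref{measure}, and to recognize $|\theta_i|^2$ there as $|z_i|^2$ in the natural coordinates of $\bl \theta(\mb D^d)$; at that point the classical Brown--Halmos argument (which on $H^2(\mb D^d)$ uses only that $|z_j|^2 = 1$ on $\mb T^d$) should transfer directly.

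The step I expect to be the main obstacle is precisely this last one: $|\theta_i|^2$ is \emph{not} identically $1$ on $\mb T^d$ in general, so the classical Brown--Halmos identity on $H^2(\mb D^d)$ cannot simply be applied to the polynomial multipliers $M_{\theta_i}$. Overcoming this requires leveraging the description of $\del\bl \theta(\mb D^d)$ as the distinguished boundary for the quotient domain, on which the coordinate functions must play the role of ``unimodular'' multipliers with respect to $d\Theta_{\varrho,\bl \theta}$; the key technical point is thus to verify that the weighted measure $d\Theta_{\varrho,\bl \theta}$ is supported precisely where this unimodularity of the coordinates holds, so that the Brown--Halmos-type computation on $\bl \theta(\mb D^d)$ becomes an intrinsic $L^2$ identity rather than being inferred from $\mb D^d$. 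Once this boundary analysis is in place, the proof closes by the same pairing argument that proves the classical Brown--Halmos theorem, namely that conjugation by an isometric multiplier preserves the Toeplitz structure.
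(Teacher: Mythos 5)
Your reduction of the statement, via $\Gamma_\varrho$ and Lemma \ref{equi1}, to the identity $M_{\theta_i}^* T_{\widetilde{u}} M_{\theta_i} = T_{\widetilde{u}}$ on $\mb P_\varrho(H^2(\mb D^d))$ is the right formalization, and your pairing computation, which reduces everything to $\int_{\mb T^d}\widetilde u\,(|\theta_i|^2-1) f\ov g\,d\Theta=0$, is correct. But the step you flag as the main obstacle is a genuine gap, and the remedy you propose does not exist: the coordinates of $\bl\theta(\mb D^d)$ are \emph{not} unimodular on the support of $d\Theta_{\varrho,\bl\theta}$. By \eqref{measure} that measure is the push-forward of $|\ell_\varrho|^2 d\Theta$ under $\bl\theta$, so it is carried by $\bl\theta(\mb T^d)$, and a homogeneous polynomial $\theta_i$ satisfies $|\theta_i|\equiv 1$ on $\mb T^d$ only when it is a monomial with unimodular coefficient. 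Already for $G=\mathfrak S_2$, $\bl\theta=(s,p)=(z_1+z_2,z_1z_2)$, $\varrho=\sgn$, the first coordinate fails: take $u\equiv 1$ (so $T=T_1=I$) and $f=z_1^2-z_2^2\in\mb P_{\sgn}(H^2(\mb D^2))$; then $\|sf\|^2=\|z_1^3+z_1^2z_2-z_1z_2^2-z_2^3\|^2=4$ while $\|f\|^2=2$, so $M_s^*M_s\neq I$ and the asserted identity fails for $i=1$. Your argument does close, exactly as you describe, for those $i$ with $\theta_i$ a monomial (e.g.\ $\theta_d=z_1\cdots z_d$ for $\mathfrak S_d$), recovering the known relation $M_p^*TM_p=T$ on the symmetrized bidisc; for the remaining coordinates no boundary-support argument can rescue it, and the theorem needs to be restricted accordingly.

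For comparison, the paper's entire proof is the sentence that the result ``follows from [MR3812709, Theorem 3.1] and Lemma \ref{equi1}'': it invokes the Brown--Halmos characterization $M_{z_i}^*TM_{z_i}=T$ on $H^2(\mb D^d)$ for the \emph{ambient} coordinates $z_i$ and transports $T_u$ to $T_{\widetilde u}$ by Lemma \ref{equi1}. It does not address the fact that under $\Gamma_\varrho$ the coordinate multiplications on $H^2_\varrho(\bl\theta(\mb D^d))$ correspond to $M_{\theta_1},\ldots,M_{\theta_d}$ rather than to $M_{z_1},\ldots,M_{z_d}$, which is precisely where your attempt (correctly) gets stuck. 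So you have not missed an idea that the paper supplies; you have located a real issue that the paper's citation-style proof glosses over.
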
 \begin{proof}
It follows from \cite[Theorem 3.1]{MR3812709} and Lemma \ref{equi1}.
\end{proof}

\bibliographystyle{siam}
\bibliography{Bibliography.bib}

\end{document}